 \newtheorem{thm}{Theorem}[section]
 \newtheorem{theorem}{Theorem}[section]
 \newtheorem{corollary}[thm]{Corollary}
 \newtheorem{lemma}[thm]{Lemma}
 \theoremstyle{definition}
 \newtheorem{remark}[thm]{Remark}
 \newtheorem{assumption}[thm]{Assumption}
 \newcommand{\ind}{\mathbbm{1}}
 \newcommand{\R}{\mathbb{R}}
 \newcommand{\Z}{\mathbb{Z}}
 \newcommand{\ic}{\mathrm{i}}
 \newcommand{\E}{\mathbb{E}}
 \newcommand{\N}{\mathbb{N}}
 \newcommand{\oo}{\mbox{\scriptsize $\mathcal{O}$}}
 \newcommand{\rd}{\mathfrak{r}}
 \newcommand{\FF}{\mathcal{F}}
 \newcommand{\Td}{\mathfrak{T}}
 \newcommand{\Cd}{\mathfrak{C}}
 \newcommand{\kapp}{\kappa}
\renewcommand{\P}{\mathbb{P}}
\renewcommand{\ss}{\mathfrak{s}}
\newcommand{\Aone}{{\bf (A1)}}
\newcommand{\Atwo}{{\bf (A2)}}
\newcommand{\Athree}{{\bf (A3)}}
\newcommand{\Vone}{{\bf (V1)}}
\newcommand{\Vtwo}{{\bf (V2)}}
\newcommand{\Bone}{{\bf (B1)}}
\newcommand{\Btwo}{{\bf (B2)}}
\begin{document}

\begin{frontmatter}
\title{Edgeworth expansions for volatility models}
\runtitle{Edgeworth expansions for volatility models}

\begin{aug}
\author[A]{\fnms{Moritz} \snm{Jirak}\ead[label=e1]{moritz.jirak@univie.ac.at}},
\address[A]{University of Vienna.
\printead{e1}}

\end{aug}

\begin{abstract}
Motivated from option and derivative pricing, this note develops Edgeworth expansions both in the Kolmogorov and Wasserstein metric for many different types of discrete time volatility models and their possible transformations. This includes, among others, H\"{o}lder-type functions of (augmented) Garch processes of any order, iterated random functions or Volterra-processes.
\end{abstract}

\begin{keyword}
\kwd{Edgeworth expansions}
\kwd{Weak dependence}
\kwd{Volatility models}
\kwd{Garch}
\end{keyword}

\end{frontmatter}

\section{Introduction}

Consider a strictly stationary sequence $(X_k)_{k \in \Z}$ of real-valued random variables with $\E X_k = 0$ and $\E X_k^2 < \infty$. If the sequence exhibits weak dependence in a certain sense, then the distribution \hypertarget{sn:eq2}{of}
\begin{align*}
n^{-1/2}S_n, \,\, \text{where $S_n = X_1 + X_2 + \ldots + X_n$},
\end{align*}
is asymptotically normal, see for instance \cite{maxwellwoodroofe} and the references therein. This fact has made the central limit theorem one of the most important tools in probability theory and statistics. On the other hand, it was already noticed by Chebyshev \cite{chebyshev1890} and Edgeworth \cite{Edgeworth1894} that normal approximations can be improved in terms \hypertarget{psin:eq2}{of} (Edgeworth) expansions $\Psi_n$, implying the approximation
\begin{align}\label{eq:edge}
\sup_{x \in \R}\big|\P\big(S_n \leq x\sqrt{n}\big) - \Psi_n(x)\big| = \oo\big(n^{-\frac{1}{2}}\big) \quad \text{(or even better)}
\end{align}
in the Kolmogorov metric, where
\begin{align}\label{defn_PSI_m}
\Psi_n\bigl(x\bigr) = \Phi\bigl(x/s_{n}\bigr) + {\kappa}_{n}^3\bigl(1 - x^2/{s}_{n}^2 \bigr) \phi\bigl(x/{s}_{n}\bigr)
\end{align}
with
\begin{align}\label{defn:sn:kappan}
s_n^2 = n^{-1}\E S_n^2, \quad \kappa_n^3 = n^{-3/2}\E S_n^3.
\end{align}

Motivated by applications in actuarial science, Cram\'{e}r gave rigorous proofs in ~\cite{cramer1928}, and ever since, Edgeworth expansions have been an indispensable tool in actuarial science and finance, see the discussion below. They also arise in the context of dynamical system theory and Markovian setups, e.g.~\cite{kasun:2018:edgeworth},~\cite{meyn_kontoyiannis_2003aap},~\cite{herve_pene_2010_bulletin} and the references therein for more recent results, and also~\cite{goetze_hipp_1983},~\cite{lahiri_1993} for a general, weakly dependent framework\footnote{These lists of references are by no means complete and only represent a mere fraction of the vast literature in this area.}.
On the other hand, in a very influential work, Efron~\cite{efron1979} broadened the view on resampling techniques (e.g. bootstrapping) and demonstrated their significant superior performance compared to normal approximations, see~\cite{efron_1982},~\cite{hall_book_boot},~\cite{lahiri_book_2003} for an overview. Not surprisingly, the key tools for analysing, and, in particular, showing superiority of resampling methods, are again Edgeworth expansions.\\
\\
Our main motivation here stems more from actuarial, econometric, finance and risk management considerations. Very prominent models in these areas in a discrete time setting are (augmented) Garch processes, e.g.~\cite{duan:1995:garch},~\cite{duan_1997}, ~\cite{bollerslev_garch},~\cite{heston:garch:2000},~\cite{garch:book:2010}. It is well-known that already the Black-Scholes formula for option pricing has serious shortcomings, e.g.~\cite{gatheral}. To address these problems, a common and quite successful approach is to employ more complex models and use (Edgeworth) expansions to salvage comparatively simple and easy to evaluate formulas, see for instance~\cite{alos_2006_fin_and_stoch},~\cite{bayer:2017:qf},~\cite{duan:garch:2000:edge},~\cite{fouque:2003},~\cite{friz-AAP1608},~\cite{Fukasawa_2011ejp},~\cite{hansen:garch:2017}. To illustrate this further, consider the (standard) model
\begin{align}
\log P_{nt} = \sum_{k = 1}^{nt} V_{k-1} \frac{\epsilon_k}{\sqrt{n}} - \frac{1}{2n} \sum_{k = 1}^{nt} h(V_{k-1}^2), \quad t \in [0,1],
\end{align}
where $\log P_{nt}$ is the log-price of some derivative under a martingale measure for appropriate function $h$. Here, $\epsilon_k$ are the innovations and $V_k^2$ is some volatility process with $V_k^2 \in \sigma(\epsilon_{k}, \epsilon_{k-1}, \ldots)$. Then one seeks an approximation of the type
\begin{align*}
\E f\big(\log P_{nt} \big) \approx \int f(x) \Lambda_{nt}\bigl(dx\bigr),
\end{align*}
where the function $f(x)$ describes the pay-off of some option and $\Lambda_{nt}$ is a 'convenient', signed measure. Since we may express $\log P_{nt} = S_{nt}/\sqrt{n} + \sum_{k = 1}^{nt} \E h(V_{k-1}^2) /2n$ in terms of a normalised and centred sum of (dependent) random variables, the connection to Edgeworth expansions is obvious. A rather prominent example in this context are European Put-Options, where $f(x) =  \max\{K-e^x,0\}$ for some strike price $K > 0$. Now $|f(x) - f(y)| \leq K|x-y|$, and hence $f(x)$ is Lipschitz-continuous. The latter is true for many options, and thus another natural metric to measure the quality of Edgeworth expansions is in terms of the Wasserstein metric $W_1$. In the latter, a Gamma-type approximation is more convenient in our setting, see \eqref{wasserstein_1} for more details and definitions.\\
\\
Our main contribution and novelty here is to establish the validity of Edgeworth expansions both in the Kolmogorov and Wasserstein metric for various classes of popular volatility type models. This includes in particular - for the first time, to the best of the authors knowledge - functions of (augmented) Garch($\mathfrak{p},\mathfrak{q}$)-processes of any order. Previously, only the case $\mathfrak{p} = \mathfrak{q} = 1$ appears to have been treated in the literature. In addition, it seems that there are almost no results concerning Edgeworth expansions for weakly dependent processes in terms of the Wasserstein distance in general.\\
\\
This note is structured as follows. In Section \ref{sec:main}, we present the setup and our main global results. We then show how to use these to derive the validity of Edgeworth expansions both in the Kolmogorov and Wasserstein metric for H\"{o}lder type functionals of various volatility type models, see Section \ref{sec:aug:garch} (augmented Garch processes), Section \ref{sec:iterated} (iterated random functions), Section \ref{sec:linear} (linear processes) and Section \ref{sec:volterra} (Volterra processes) for details.

\section{Setup and main global results}\label{sec:main}

{For} a random variable $X$, we write $\E X $ for expectation, $\|X\|_p$ for $\big(\E |X|^p \big)^{1/p}$, $p \geq 1$, and sometimes $\E_{\mathcal{H}} X = \E [X | \mathcal{H}]$ for conditional expectation, and in analogy $\P_{\mathcal{H}}(\cdot)$ for conditional probabilities. $\lesssim$, $\gtrsim$, ($\approx$) denote (two-sided) inequalities involving a multiplicative constant. For $a,b \in \R$, we put $a \vee b = \max\{a,b\}$, $a \wedge b = \min\{a,b\}$. For two random variables $X,Y$, we write $X \stackrel{d}{=} Y$ for equality in distribution. For an i.i.d. sequence $(\epsilon_k)_{k \in \Z}$, let $\mathcal{E}_k = \sigma\bigl(\epsilon_j, \, j \leq k\bigr)$, and $\mathcal{E}_k^+ = \sigma\bigl(\epsilon_j,\, j \geq k\bigr)$.

Consider a sequence of real-valued, measurable random variables $X_1,\ldots,X_n$. It is well known (cf.~\cite{rosenblatt:book}), that this sequence can be assumed to satisfy $X_k \in \mathcal{E}_k$, that is, we have

\begin{align}\label{eq_structure_condition}
{X}_{k} = g_k\bigl(\epsilon_{k}, \epsilon_{k-1}, \ldots \bigr)
\end{align}
for some measurable functions $g_k$\footnote{In fact, $g_k$ can be selected as a map from $\R^k$ to $\R$}, where $(\epsilon_k)_{k \in \Z}$ is a sequence of independent and identically distributed random variables. For notational
convenience, we sometimes assume $g_k = g$, that is, the function $g$ does not depend on $k$. Such processes are usually referred to as (time-homogenous) Bernoulli-shift processes. 

Representation \eqref{eq_structure_condition} allows to give simple, yet very efficient and general dependence conditions. Following \cite{wu_2005}, let $(\epsilon_k')_{k \in \Z}$ be an independent copy of $(\epsilon_k)_{k \in \Z}$ on the same probability space, and define the 'filter' $\theta_{k}^{(l,*)}$ as
\begin{align}\label{defn_strich_depe_2}
\theta_{k}^{(l,*)} = \bigl(\epsilon_{k}, \epsilon_{k - 1},\ldots,\epsilon_{k - l}',\epsilon_{k - l - 1}',\epsilon_{k-l-2}',\ldots\bigr).
\end{align}
We write
\begin{align*}
\theta_{k}^* = \theta_{k}^{(k,*)} = (\epsilon_{k }, \epsilon_{k - 1},\ldots,\epsilon_{0}',\epsilon_{- 1}',\epsilon_{- 2}',\ldots ),
\end{align*}
${X}_{k}^{(l,*)} = g_k(\theta_{k}^{(l,*)})$, and ${X}_{k}^{*} = {X}_{k}^{(k,*)}$. As dependence measure, one may then define
\begin{align}\label{defn:dep:measure:general}
\vartheta_{l}^{\ast}(p) = \sup_{k \in\mathbb{Z}} \|{X}_{k} - {X}_{k}^{(l,\ast)}\|_p.
\end{align}
If $g = g_k$ does not depend on $k$ (time-homogenous case), $\vartheta_{l}^{\ast}(p)$ simplifies to
\begin{align*}
\vartheta_{l}^{\ast}(p)=\big\|X_l-X_l^*\big\|_p.
\end{align*}
Our basic condition regarding weak dependence is now the following.

\begin{assumption}\label{ass:dependence:main}
For $p > 3$, $(X_k)_{k\in \Z}$ is stationary and satisfies
\begin{enumerate}
\item[\Aone]\label{A1} $\E |X_k|^p < \infty$, $\E X_k = 0$,
\item[\Atwo]\label{A2} $\sum_{k = 1}^{\infty}k^{2}\vartheta_{k}^{\ast}(p) < \infty$,
\item[\Athree]\label{A3} $\ss^2 > 0$, where $\ss^2 =  \sum_{k \in \Z}\E X_0X_k$.
\end{enumerate}
\end{assumption}

Our requirement of stationarity is more a convenience condition, and can be replaced with quenched or locally stationary setups.

As is well known, validity of Edgeworth expansions is not for free and requires some non-lattice condition. We need the following regularity asssumptions regarding the underlying distribution.

\begin{assumption}\label{ass:smallball:I}
Consider $\mathcal{E}_l' = \sigma(\epsilon_k', k \leq l)$, which is independent of $\mathcal{E}_l$. For any $\delta > 0$ and $l \in \Z$, there exists a family of random variables  $(X_k^+) \in \mathcal{G}_l \stackrel{def}{=} \sigma(\mathcal{E}_l^+ \cup \mathcal{E}_l')$, such that
\begin{enumerate}
\item[\Bone]\label{B1} $\P\Big(\sum_{k \geq l} \E_{\mathcal{G}_l}\big|X_k - X_k^+\big| \leq \delta  \Big) > 0$,
\item[\Btwo]\label{B2} $\E\big|\E_{\mathcal{E}_{l-1}} e^{\mathrm{i} \xi X_l }  \big| < 1$ for any $\xi \neq 0$.
\end{enumerate}
\end{assumption}


Observe that condition \hyperref[B2]{\Btwo} is a non-lattice condition, and will be easy to verify in case of our applications. The key to our results is \hyperref[B1]{\Bone}, which is a small ball condition. While it is not true in general, we show below that it does hold for a huge class of volatility models and their Hölder-continuous transformations (and even more). Note that validity of \hyperref[B1]{\Bone} does not imply that $\sum_{k \geq l} \E_{\mathcal{E}_l^+}\big|X_k - X_k^+\big|$ or even $X_k$ is non-lattice. Our first result is the following.

\begin{theorem}\label{thm:edge:smallball}
Assume that Assumptions \ref{ass:dependence:main} and \ref{ass:smallball:I} hold. Then
\begin{align*}
\sup_{x \in \R}\bigl|\P\bigl(S_n \leq x \sqrt{n}\bigr) -  \Psi_n\bigl(x\bigr) \bigr| = \oo\bigl(n^{-1/2}\bigr).
\end{align*}
In particular, there exists $b_n \to \infty$ and $\delta > 0$ such that for any $a > 0$
\begin{align}\label{eq:thm:edge:smallball:condition}
\sup_{\xi \in [a,b_n]} \Big|\E e^{\ic \xi {S}_{n}} \Big| \leq C_{a,\delta}  n^{-1/2 - \delta}.
\end{align}
\end{theorem}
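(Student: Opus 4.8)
The starting point is the characterisation established in \cite{jirak}. Under Assumption~\ref{ass:dependence:main}, the moment bound \hyperref[A1]{\Aone} and the weighted summability \hyperref[A2]{\Atwo} force a valid third-order cumulant expansion of $\log\E e^{\ic\xi S_n/\sqrt n}$ on a fixed neighbourhood of the origin, which is exactly what produces the main term $\Psi_n$ with $s_n,\kappa_n$ as in \eqref{defn:sn:kappan} (here \hyperref[A3]{\Athree} keeps $s_n^2$ bounded away from $0$). Granting this local expansion, \cite{jirak} shows that the Edgeworth bound $\sup_x|\P(S_n\le x\sqrt n)-\Psi_n(x)|=\oo(n^{-1/2})$ is \emph{equivalent} to the decay of the Berry--Esseen characteristic $\Cd_a$, i.e.\ to the existence of $b_n\to\infty$ and $\delta>0$ with $\sup_{\xi\in[a,b_n]}|\E e^{\ic\xi S_n}|\le C_{a,\delta}\,n^{-1/2-\delta}$ for every $a>0$, which is the ``in particular'' clause. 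Thus the whole proof reduces to this characteristic-function estimate; from here on Assumption~\ref{ass:dependence:main} is used only through \hyperref[A2]{\Atwo} to absorb mixing errors, while the genuinely new input is Assumption~\ref{ass:smallball:I}.

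To bound $|\E e^{\ic\xi S_n}|$ uniformly over $\xi\in[a,b_n]$ I would run a blocking-and-conditioning scheme in the spirit of G\"{o}tze--Hipp. Fix such a $\xi$, pick slowly growing block and gap lengths $L=L_n\to\infty$ and $r=r_n\to\infty$ with $r_n=\oo(L_n)$, and split $\{1,\dots,n\}$ into $m\asymp n/L$ blocks separated by gaps of width $r$. Peeling the blocks off one at a time and using \hyperref[A2]{\Atwo} to replace each block sum by a version $\widehat B_j$ depending only on the innovations in a window of width $\asymp r$ around it --- the total replacement cost being $\lesssim m\sum_{k\ge r}\vartheta_k^{\ast}(p)$, which the $k^2$-weight in \hyperref[A2]{\Atwo} renders $\oo(n^{-1/2-\delta})$ once $r$ is large enough --- one arrives at a product estimate of the schematic form
\begin{align*}
\bigl|\E e^{\ic\xi S_n}\bigr|\;\le\;\E\prod_{j=1}^{m}\bigl(1-c_j(\xi)\bigr)\;+\;(\text{decoupling error}),
\end{align*}
where $1-c_j(\xi)$ bounds the conditional characteristic function of $\widehat B_j$ given the innovations outside its window. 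The small-ball condition \hyperref[B1]{\Bone} is exactly what legitimises this conditioning: it guarantees that on an event of positive probability the relevant tail sum is, up to $\delta$ in conditional $L^1$, measurable with respect to $\mathcal G_l$ --- the future innovations together with a \emph{fresh} copy of the past --- so that the chain can be ``regenerated'' at the corresponding cut and the block treated as conditionally independent of its predecessors; on the complementary event the factor is simply bounded by $1$.

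For the single-block decrement one isolates, inside a block, the contribution of its last innovation $\epsilon_l$: conditionally on everything else that summand equals $X_l$ plus an already-measurable term, and $X_l$ is non-lattice given $\mathcal E_{l-1}$ by \hyperref[B2]{\Btwo}, so that $\E\bigl[\,\bigl|\E_{\mathcal E_{l-1}}e^{\ic\xi X_l}\bigr|\,\bigr]\le 1-\eta(\xi)$ with $\eta$ continuous (dominated convergence) and strictly positive on $(0,\infty)$ (by \hyperref[B2]{\Btwo} and compactness). Feeding the regeneration probability from \hyperref[B1]{\Bone} into the product gives $\E\prod_j(1-c_j(\xi))\le e^{-c\,m\,\eta(\xi)}$, and if $b_n\to\infty$ is chosen to grow slowly enough that $m\inf_{\xi\in[a,b_n]}\eta(\xi)\gg\log n$ --- possible since $\eta$ is a fixed positive function while $m\asymp n/L_n$ grows almost linearly --- this is $\le n^{-1/2-\delta}$; adding the negligible decoupling error finishes the estimate, and hence, via \cite{jirak}, the theorem. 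The main obstacle is twofold. First, the bookkeeping: keeping the accumulated localisation and decoupling errors below $n^{-1/2-\delta}$ over $\asymp n/L$ peeling steps is the reason \hyperref[A2]{\Atwo} carries the $k^2$ weight rather than plain summability. Second, and more essential: \hyperref[B1]{\Bone} supplies only a \emph{positive}, not an overwhelming, probability of regeneration, so the conditioning must be performed inside the good events and the count of successful regenerations controlled by a second-moment argument --- this is precisely what replaces the classical conditional Cram\'{e}r condition and makes the scheme run for volatility models.
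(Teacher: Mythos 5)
Your proposal follows essentially the same route as the paper: reduce the Edgeworth bound to a characteristic-function/Berry--Esseen-characteristic estimate via \cite{jirak:edge:trans:2021}, decouple block sums using the weighted summability in \Atwo, and obtain a per-block contraction by combining the positive-probability small-ball event from \Bone{} with the single-innovation non-lattice bound from \Btwo{} (continuity plus a compactness argument), then multiply over blocks and let $b_n\to\infty$ slowly. Two minor points where the paper is cleaner than your sketch: its coupling with fresh innovation copies makes the odd-indexed block factors exactly independent, so the per-block bound in expectation multiplies directly and no second-moment count of ``successful regenerations'' is needed; and the block length must grow polynomially (the paper takes $m\asymp n^{\mathfrak m}$, $\mathfrak m\in(1/4,1)$, so that the coupling error $\sqrt n\,m^{-2}$ beats $n^{-1/2}$), not slowly as you state.
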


Next, we turn to the Wasserstein metric $W_1$. For two probability measures $\P_1,\P_2$, let $\mathcal{L}(\P_1,\P_2)$ be the set of all probability measures on $\R^2$ with marginals $\P_1, \P_2$. The Wasserstein metric (of order one) is defined as the minimal coupling $\mathds{L}^1$-distance, \hypertarget{Wone:eq10}{that} is,
\begin{align}\label{wasserstein_1}
W_1(\P_1,\P_2) = \inf\Big\{ \int_{\R^2} |x-y| \P(dx, dy): \, \P \in \mathcal{L}(\P_1, \P_2)\Big\}.
\end{align}


Let $\mathds{V}_n$ be the (signed) measure induced by $\Psi_n$. Then a priori, the distance $W_1(\P_1, \mathds{V}_n)$ is not defined in general. In ~\cite{Bobkov2017_wasser}, generalized transport distances are introduced that also allow for signed measures. In order to maintain the original definition in terms of couplings, we follow ~\cite{jirak:edge:trans:2021} and replace $\Psi_n$ with a probability measure that is induced by a sequence of i.i.d. random variables. Let $Z$ be a \hypertarget{kappan3:eq10}{zero} mean Gaussian random variable $\mathcal{N}(0,\sigma^2)$ \hypertarget{TAUSUBN:eq11}{with} variance $\sigma^2 = s_n^2$, and $G$ follow a Gamma distribution $\Gamma(\alpha, \beta)$ with shape parameter $\alpha = s_n^2 \beta$ and rate $\beta = 2 s_n^{2} (\sqrt{n} \kapp_n)^{-3}$, independent of $Z$ (recall \eqref{defn:sn:kappan} for $s_n$, $\kappa_n$). 
 \hypertarget{bigMk:eq10}{Then} for $1\leq k \leq n$, \hypertarget{MMn:eq10}{let}
\begin{align}\label{defn_gauss_plus_gamma}
M_k \stackrel{d}{=}  \left\{
\begin{array}{ll}
\big(Z + G-\E G\big)/\sqrt{2} &\text{if $\kapp_n^3 > 0$},\\
\big(Z - G+\E G\big)/\sqrt{2} &\text{if $\kapp_n^3 < 0$},\\
Z &\text{if $\kapp_n^3 = 0$}
\end{array}
\right.
\end{align}
be i.i.d., and denote by $\P_{L_n}$ the probability measure induced by $L_n = n^{-1/2}\sum_{k = 1}^n M_k$. Observe that $\E L_n^2 = s_n^2$ and $\E L_n^3 = \kapp_n^3$. Also note that $L_n \stackrel{d}{=} Z_0 \pm  (G_0 - \E G_0)$, with $Z_0,G_0$ independent and $Z_0 \stackrel{d}{=}\mathcal{N}(0,s_n^2)$, $G_0 \stackrel{d}{=}\Gamma(n \alpha, \beta/\sqrt{n})$ with $\alpha$, $\beta$ as above.

\begin{theorem}\label{thm:wasser:general}
Grant Assumption \ref{ass:dependence:main}, and suppose that \eqref{eq:thm:edge:smallball:condition} holds.
Then
\begin{align*}
W_1\big(\P_{S_n^{}/\sqrt{n}}, \P_{L_n}\big) = o\big(n^{-1/2}\big).
\end{align*}
\end{theorem}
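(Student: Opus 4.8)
The plan is to bound the Wasserstein distance by combining a classical smoothing/Fourier argument with the characteristic-function decay supplied by \eqref{eq:thm:edge:smallball:condition}. Recall the dual (Kantorovich--Rubinstein) formulation $W_1(\P_1,\P_2) = \sup\{ \int h\, d\P_1 - \int h\, d\P_2 : \mathrm{Lip}(h) \le 1\}$, and the classical fact that for real-valued $U,V$ one has $W_1(\P_U,\P_V) = \int_\R |F_U(x) - F_V(x)|\, dx$, where $F_U, F_V$ are the distribution functions. So the task reduces to controlling $\int_\R |\P(S_n/\sqrt n \le x) - \P(L_n/\sqrt n \le x)|\, dx$ and showing it is $o(n^{-1/2})$. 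The point of choosing $L_n$ as a sum of i.i.d.\ Gaussian-plus-Gamma blocks (rather than working directly with the signed Edgeworth measure $\mathds{V}_n$) is precisely that $\P_{L_n/\sqrt n}$ is a bona fide probability measure whose characteristic function is explicit: $\E e^{\ic \xi L_n/\sqrt n}$ factors as the product of a Gaussian factor $e^{-\xi^2 s_n^2/2}$ and a Gamma factor, and by construction $L_n$ matches the first three moments of $S_n/\sqrt n$ (i.e.\ mean $0$, variance $s_n^2$, third cumulant $\kapp_n^3$), so $\widehat{\P}_{S_n/\sqrt n}(\xi) - \widehat{\P}_{L_n/\sqrt n}(\xi) = \OO(|\xi|^4 \cdot \text{(Edgeworth error)})$ for $|\xi|$ in a fixed bounded interval.

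The main steps, in order, are as follows. (i) Split the integral $\int_\R |F_{S_n/\sqrt n}(x) - F_{L_n/\sqrt n}(x)|\, dx$ into the bulk region $|x| \le T_n$ and the tails $|x| > T_n$ for a suitable truncation level $T_n$ growing slowly (polylogarithmically) in $n$; on the tails, use the moment bound $\E|S_n/\sqrt n|^p \lesssim 1$ from \hyperref[A1]{\Aone}--\hyperref[A2]{\Atwo} (via a Rosenthal-type inequality for the dependence measure $\vartheta_k^\ast(p)$) together with the matching Gaussian/Gamma tail bound to show the tail contribution is $o(n^{-1/2})$. (ii) On the bulk, apply a smoothing inequality of Esseen/Prawitz type (or directly estimate via the Fourier inversion formula applied to $F_{S_n/\sqrt n} - F_{L_n/\sqrt n}$) to reduce matters to the integral $\int_{|\xi| \le b_n} |\xi|^{-1} |\widehat{\P}_{S_n/\sqrt n}(\xi) - \widehat{\P}_{L_n/\sqrt n}(\xi)|\, d\xi$ plus a cutoff error at $|\xi| = b_n$. (iii) Split this frequency integral at a fixed small constant $a > 0$: for $|\xi| \le a$, use the Edgeworth expansion of $\widehat{\P}_{S_n/\sqrt n}$ — which is available because Assumption \ref{ass:dependence:main} lets one carry out the standard cumulant expansion of $\E e^{\ic\xi S_n/\sqrt n}$ up to order $n^{-1/2}$ with $o(n^{-1/2})$ remainder — against the exactly-matching three-moment expansion of $\widehat{\P}_{L_n/\sqrt n}$, giving a pointwise bound $o(n^{-1/2})(1 + \xi^2) e^{-c\xi^2}$ that integrates to $o(n^{-1/2})$. (iv) For $a \le |\xi| \le b_n$, bound $|\widehat{\P}_{S_n/\sqrt n}(\xi)|$ by $C_{a,\delta} n^{-1/2-\delta}$ using the hypothesis \eqref{eq:thm:edge:smallball:condition}, and $|\widehat{\P}_{L_n/\sqrt n}(\xi)|$ by its explicit (exponentially small in the Gaussian part) form; since $b_n$ grows at most like a power of $n$ — this is where one needs to track the rate in \eqref{eq:thm:edge:smallball:condition} — the integral $\int_a^{b_n} |\xi|^{-1}(\ldots)\, d\xi$ is $\OO(n^{-1/2-\delta}\log b_n) = o(n^{-1/2})$. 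Finally, choose $T_n$ and the smoothing parameter compatibly with $b_n$ so that all error terms are simultaneously $o(n^{-1/2})$.

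The step I expect to be the main obstacle is (iii): making the low-frequency Edgeworth comparison rigorous with a genuinely $o(n^{-1/2})$ (not merely $\OO(n^{-1/2})$) remainder, uniformly in $\xi$ over a fixed neighbourhood of the origin. This requires a careful cumulant/Taylor expansion of $\log \E e^{\ic\xi S_n/\sqrt n}$ to fourth order, controlling the fourth-order remainder by $\sum_k k^2 \vartheta_k^\ast(p)$-type quantities from \hyperref[A2]{\Atwo} and exploiting that $p > 3$ gives exactly enough moments; the companion expansion of the explicit $\widehat{\P}_{L_n/\sqrt n}$ is elementary, but one must check that the Gamma shape/rate parametrisation indeed cancels the $\kappa_n^3$ term of the Edgeworth polynomial to the stated order. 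A secondary technical point is ensuring the truncation/smoothing bookkeeping in (i)--(ii) does not reintroduce an $\OO(n^{-1/2})$ term; this is handled by letting $T_n \to \infty$ slowly and using $p > 3$ moments to kill the polynomial tail factor. Much of steps (i), (ii), (iv) is routine given \eqref{eq:thm:edge:smallball:condition}, and indeed the structure closely parallels the Kolmogorov-metric argument behind Theorem \ref{thm:edge:smallball}, with $\int_\R | \cdot |\,dx$ replacing $\sup_x |\cdot|$ and an extra factor of $|\xi|^{-1}$ appearing in the frequency-domain estimate.
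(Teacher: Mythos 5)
There is a genuine gap, and it sits in your steps (ii) and (iv). The hypothesis \eqref{eq:thm:edge:smallball:condition} only guarantees characteristic-function decay on $[a,b_n]$ for \emph{some} $b_n \to \infty$, with no rate attached: $b_n$ may grow arbitrarily slowly (say like $\log\log n$). Your Esseen/Prawitz-type reduction truncates the Fourier integral at $|\xi| = b_n$ and incurs a cutoff error of order $1/b_n$ (equivalently, you have no control whatsoever over the frequencies $|\xi|>b_n$), and $1/b_n = o(n^{-1/2})$ would require $b_n \gg \sqrt{n}$ — which is exactly what is \emph{not} assumed. You flag the wrong direction of the problem ("$b_n$ grows at most like a power of $n$"); the log factor is harmless, the slow growth is fatal. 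The paper's proof circumvents precisely this obstacle by first convolving with $\eta_n H$, where $H$ has the density \eqref{eq_g_ab} whose Fourier transform is compactly supported in $[-ef/\eta_n, ef/\eta_n]$: after this smoothing the characteristic function of $S_n^{\diamond}$ vanishes identically beyond $ef/\eta_n$, so choosing $\eta_n \to 0$ slowly enough that $ef/\eta_n \leq b_n$ makes the available decay range sufficient, at a Wasserstein cost $\eta_n \E|G_{e,f}|/\sqrt{n} = o(n^{-1/2})$; it then bounds the Berry--Esseen characteristic $\Cd_a$ of $S_n^{\diamond}$, converts this into a Kolmogorov-distance bound via Lemma \ref{lem_edge}, and integrates that sup-norm bound only over $|x| \lesssim \sqrt{\log n}$ (the tails being handled by the $n^{-p/2+1}$ moment term), following Theorem 3.6 of the cited transactions paper. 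Without the compact-Fourier-support smoothing device (or an equivalent trick), your direct attack on $\int_{\R}|F_{S_n/\sqrt{n}} - F_{L_n/\sqrt{n}}|\,dx$ cannot close.

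A secondary weakness is your step (iii): obtaining the low-frequency comparison with a genuinely $o(n^{-1/2})$ remainder under Assumption \ref{ass:dependence:main} is not a routine cumulant expansion — it is the substance of the Edgeworth theory for weakly dependent sequences that the paper imports wholesale through Lemma \ref{lem_edge} (Theorem 2.7 of the cited work), where the expansion error is controlled by $\Cd_{T_n}$ with $T_n \geq c\sqrt{n}$. Asserting that \hyperref[A1]{\Aone}--\hyperref[A3]{\Athree} "let one carry out the standard cumulant expansion" is a statement of the difficulty, not a proof of it; if you intend to invoke the same external result the paper does, say so explicitly, and note that it delivers a comparison to $\Psi_n$ (and hence, after an elementary computation matching the first three moments of $L_n$, to $\P_{L_n/\sqrt{n}}$) in the Kolmogorov metric, which then still has to be converted to the $\mathds{L}^1$ statement along the lines above.
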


Due to Theorem \ref{thm:edge:smallball}, an immediate consequence is the following.

\begin{corollary}\label{cor:wasser:small:ball}
Grant Assumptions \ref{ass:dependence:main} and \ref{ass:smallball:I}. Then
\begin{align*}
W_1\big(\P_{S_n^{}/\sqrt{n}}, \P_{L_n}\big) = o\big(n^{-1/2}\big).
\end{align*}
\end{corollary}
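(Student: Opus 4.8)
The plan is to obtain the corollary by simply composing the two preceding theorems, so the only thing to check is that the hypotheses chain correctly. First I would apply Theorem~\ref{thm:edge:smallball}: under Assumptions~\ref{ass:dependence:main} and~\ref{ass:smallball:I} it delivers, besides the Kolmogorov-metric expansion, the characteristic function estimate~\eqref{eq:thm:edge:smallball:condition}, i.e.\ the existence of $b_n \to \infty$ and $\delta > 0$ such that $\sup_{\xi \in [a,b_n]} |\E e^{\ic \xi S_n/\sqrt{n}}| \leq C_{a,\delta}\, n^{-1/2-\delta}$ for every $a > 0$.

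Next I would feed this into Theorem~\ref{thm:wasser:general}, whose assumptions are precisely Assumption~\ref{ass:dependence:main} together with~\eqref{eq:thm:edge:smallball:condition}. Since both are now in force, Theorem~\ref{thm:wasser:general} gives $W_1(\P_{S_n/\sqrt{n}}, \P_{L_n/\sqrt{n}}) = o(n^{-1/2})$, which is exactly the assertion of the corollary. Note that the two theorems share the dependence Assumption~\ref{ass:dependence:main}, so no additional moment or mixing hypothesis is needed and the conclusion holds verbatim.

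At the level of the corollary itself there is no real obstacle; all the work sits inside the two theorems it invokes. The delicate points, which I would expect to be the genuine difficulties, are: (i) establishing~\eqref{eq:thm:edge:smallball:condition}, which amounts to controlling the oscillation/decay of the characteristic function of $S_n/\sqrt{n}$ on a frequency window growing with $n$, and which is where the small-ball condition~\hyperref[B1]{\Bone} enters alongside the non-lattice condition~\hyperref[B2]{\Btwo}; and (ii) converting a Kolmogorov-type Edgeworth expansion into a coupling bound against the \emph{signed} measure $\mathds{V}_n$, which is handled by replacing $\Psi_n$ with the law of the i.i.d.\ Gaussian-plus-Gamma surrogate $L_n$ whose first three moments match those of $S_n/\sqrt{n}$.
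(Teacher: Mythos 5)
Your proposal is correct and matches the paper's own argument: the corollary is stated there as an immediate consequence of Theorem~\ref{thm:edge:smallball}, which supplies the characteristic function bound \eqref{eq:thm:edge:smallball:condition}, so that Theorem~\ref{thm:wasser:general} applies under Assumption~\ref{ass:dependence:main}. Nothing further is needed.
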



\section{Volatility models}\label{sec:vola}

Over the past decades, the following basic model has emerged as a key building block in econometrics, finance and actuarial science for an underlying process $(Y_k)$:

\begin{align}\label{eq:vola:model}
Y_k = \epsilon_k V_{k-1}, \quad k \in \Z,
\end{align}
where $(\epsilon_k)_{k \in \Z}$ are i.i.d. and $V_k \in \mathcal{E}_k$ is some volatility process. The actual models for asset prices (and related) are then obtained by appropriate transformations, compensations or by passing on to the limit to obtain stochastic differential equations. A sheer endless amount of models and processes of this type have been established and discussed. Since our focus here lies on discrete time, we mention for instance~\cite{duan:1995:garch},~\cite{duan_1997},~\cite{bollerslev_garch},~\cite{heston:garch:2000},~\cite{garch:book:2010} which, however, presents only an almost infinitesimal fraction of the literature.\\
\\
Our basic setup here is the following. We consider processes $Y_k$ of type \eqref{eq:vola:model}, where we assume that $V_k \in \mathcal{E}_k$ is stationary. In the sequel, it will be convenient to use $\varepsilon \stackrel{d}{=} \epsilon_k$. Let $X_k = f(Y_k) + h_n(V_{k-1})$ ($f,h_n$ specified below) and $S_k = \sum_{j = 1}^k X_j$. Consider the price of an asset $P_k = e^{n^{-1/2}S_k}$. If we select $h_n \equiv 0$ and $f$ such that
\begin{align}\label{f:expansion}
f(x) = x -\frac{x^2}{2\sqrt{n}} - \frac{2x^3}{3 n} + r(x), \quad  r(Y_k) = O_{\P}(n^{-3/2}),
\end{align}
then, given sufficiently many (exponential) moments\footnote{Using smooth truncation functions, moment conditions can be drastically reduced here while maintaining the result.}, we obtain
\begin{align}\label{P:almost:martingale}
\E\big[P_{k+1} \big| \mathcal{E}_k \big] = P_k + O_{\P}\big(n^{-2}\big).
\end{align}
Hence $P_k$ is almost a martingale, and the actual error in \eqref{P:almost:martingale} can be made arbitrarily small by further specifying $g$. On the other hand, formally letting
\begin{align}
X_k = Y_k - h_n(V_{k-1}), \quad h_n(V_{k-1}) = n^{1/2}\log \E\big[e^{n^{-1/2} Y_k}|V_{k-1}\big],
\end{align}
it follows that $P_k$ is a martingale. Note in particular that if $\varepsilon$ has a standard Gaussian distribution $\mathcal{N}(0,1)$, then we get the well-known form $X_k = Y_k - V_{k-1}^2/ (2 n^{1/2})$. More generally, a formal Taylor expansion around zero with $\E \varepsilon = 0$ leads to
\begin{align}\label{eq:formal:taylor:vola}
h_n(V_{k-1}) = \frac{V_{k-1}^2}{2 \sqrt{n}} \E \varepsilon^2 + O_{\P}(n^{-1}).
\end{align}

We wish to apply Theorem \ref{thm:edge:smallball} to $X_k = f(Y_k) + h_n(V_{k-1})$, where we assume $\E X_k = 0$. Having in mind \eqref{f:expansion}, \eqref{eq:formal:taylor:vola}, but also statistical applications (power transformations), we consider functions $f,h_n$ satisfying the generalised H\"{o}lder condition
\begin{align}\label{eq:hoelder:function}
\big|f(x) - f(y)\big| \leq L |x-y|^{\beta}\big(1 + |x|^{\alpha} + |y|^{\alpha}\big), \quad \alpha \geq 0,\, \beta,L > 0.
\end{align}
For future reference, we denote this class with $\mathcal{H}(L,\alpha,\beta)$. Moreover, we assume that
\begin{align}\label{eq:h:limit:zero}
\lim_{n \to \infty} \E \big|h_n(V_{k}) \big| = 0,
\end{align}
which in light of \eqref{eq:formal:taylor:vola} is a mild condition.

Our basic condition to verify Assumption \ref{ass:smallball:I} is the following.

\begin{assumption}\label{ass:smallball:vola}
Assume that $Y_k$ is stationary, is of the form \eqref{eq:vola:model}, the functions $f,h_n$ satisfy \eqref{eq:hoelder:function} (uniformly in $n$), $\ss^2 =  \sum_{k \in \Z}\E X_0X_k > 0$, and in addition:
\begin{enumerate}
\item[\Vone]\label{V1} For any $\delta > 0$, we have $\P\big(|\varepsilon| \leq \delta  \big) > 0$.
\item[\Vtwo]\label{V2} There exists a compact set $\mathcal{V}$ with $\P(V_0 \in \mathcal{V}) > 0$, such that for any $v \in \mathcal{V}$ and $\xi \neq 0$, we have $\big|\E e^{\mathrm{i} \xi f(\varepsilon v)} \big| < 1$.
\end{enumerate}
\end{assumption}

\begin{remark}
If we strengthen \hyperref[V1]{\Vone} a little bit, we can simplify condition \hyperref[V2]{\Vtwo}: Suppose that $\P( |\varepsilon| \leq \delta, \varepsilon \neq 0 ) > 0$ for all $\delta > 0$. Then it is sufficient to demand that $f$ is non-constant in a surrounding of zero. 
\end{remark}

We now discuss three particular, yet quite general class of models. We note, however, that the method of proof is more general and can also be applied to other classes of models.

\subsection{Augmented Garch}\label{sec:aug:garch}

Arch, Garch and augmented Garch models have had a huge impact both in theory and practice, see~\cite{bollerslev_garch},~\cite{duan_1997} and~\cite{garch:book:2010}. In one of its more general forms, it can be stated as

\begin{align}
\Lambda(V_k^{2}) = \sum_{i = 1}^{\mathfrak{p}} g_i(\epsilon_{k-i}) + \sum_{i = 1}^{\mathfrak{q}} c_i(\epsilon_{k-i}) \Lambda(V_{k-i}^2),
\end{align}
where $g_i, c_i \geq 0$ are positive functions, and $g_{i_0} \geq \omega > 0$ for some $1 \leq i_0 \leq \mathfrak{p}$. Motivated from the Box-Cox transformation, the function $\Lambda(\cdot)$ typically comprises the cases $\Lambda(x) = \log x$ or $\Lambda(x) = x^{\lambda}$, $\lambda > 0$. We will consider the latter. For $q \geq 1$, an important quantity is
\begin{align*}
\gamma_c = \sum_{i = 1}^{\mathfrak{r}} \|c_i(\varepsilon_0)\|_{q}, \quad \text{with $\mathfrak{r} = \max\{\mathfrak{p},\mathfrak{q}\}$},
\end{align*}
where we replace possible undefined $c_i$ (and $g_i$) with zero. If $\gamma_c < 1$, then $(V_k)_{k \in \Z}$ is stationary. In particular, one can show the representation
\begin{align*}
V_k^{2\lambda} = \sum_{m = 1}^{\infty} \sum_{1 \leq l_1,\ldots,l_m\leq \mathfrak{r}}  g_{l_m}(\varepsilon_{k - l_1 - \ldots - l_m}) \prod_{i = 1}^{m-1} c_i(\varepsilon_{k - l_1 - \ldots - l_i}),
\end{align*}
see~\cite{berkes_2008_letter} for comments and references on this matter. In particular, $V_k$ is a time-homogenous Bernoulli-shift process, that is, $g_k = g$ does not depend on $k$ in representation \eqref{eq_structure_condition}.

In the sequel, we will only treat the case $\lambda \geq 1/2$, the case $\lambda < 1/2$ follows in essentially the same manner,
requiring more moment conditions though. Our assumptions are now the following.

\begin{assumption}\label{ass:garch}
Let $q > \max\{(\alpha \vee \beta )/\lambda, 3(\alpha + \beta)\}$ ($\alpha, \beta$ as in \eqref{eq:hoelder:function}), $\lambda \geq 1/2$. For all $1 \leq i \leq \mathfrak{r}$, there exists $\delta > 0$ such that $\sup_{|x| \leq \delta} c_i(x) \leq \|c_i(\varepsilon_0)\|_{q}$ and $\sup_{|x| \leq \delta} g_i(x) \leq C_g$. Moreover, we have $\gamma_c < 1$ and $\|g_i(\varepsilon)\|_q, \E |\varepsilon|^{2 \vee q} < \infty$.
\end{assumption}

We then have the following result.

\begin{theorem}\label{thm:garch}
Grant Assumptions \ref{ass:smallball:vola} and \ref{ass:garch}. Then both Assumptions \ref{ass:dependence:main} and \ref{ass:smallball:I} hold. In particular, Theorem \ref{thm:edge:smallball} and Corollary \ref{cor:wasser:small:ball} apply.
\end{theorem}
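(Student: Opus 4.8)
The plan is to verify Assumptions \ref{ass:dependence:main} and \ref{ass:smallball:I}; once this is done, Theorem \ref{thm:edge:smallball} and, through \eqref{eq:thm:edge:smallball:condition}, Corollary \ref{cor:wasser:small:ball} apply directly. The recurring tool is the series representation of $V_k^{2\lambda}$: since the innovations in a given summand carry pairwise distinct indices, independence gives $\|g_{l_m}(\varepsilon)\prod_i c_{l_i}(\varepsilon)\|_r = \|g_{l_m}(\varepsilon)\|_r\prod_i\|c_{l_i}(\varepsilon)\|_r$, so for every $r\le q$ one has $\sum_i\|c_i(\varepsilon)\|_r\le\gamma_c<1$; hence $\|V_0^{2\lambda}\|_r<\infty$, the series converges geometrically, and the mass of the summands whose innovation indices reach below a level at distance $l$ is $\lesssim\gamma_c^{l/\mathfrak{r}}$.

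For Assumption \ref{ass:dependence:main}, condition \Athree{} is part of Assumption \ref{ass:smallball:vola}. Fix $p\in(3,\,2\lambda q/(\alpha+\beta)]$, a nonempty range since $\lambda\ge 1/2$ and $q>3(\alpha+\beta)$. Writing $X_k=f(\varepsilon_k V_{k-1})$ and using \eqref{eq:hoelder:function} with $f(0)$ held fixed gives $|X_k|\lesssim 1+|\varepsilon_k|^{\alpha+\beta}|V_{k-1}|^{\alpha+\beta}$; by $\varepsilon_k\perp V_{k-1}$, $\E|X_0|^p<\infty$ follows from $\|V_0\|_{2\lambda q}<\infty$ and the moment assumptions on $\varepsilon$, and centring gives \Aone{}. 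For \Atwo{}, from \eqref{eq:hoelder:function}, $|X_k-X_k^{(l,*)}|\le L|\varepsilon_k|^\beta|V_{k-1}-V_{k-1}^{(l-1,*)}|^\beta(1+|\varepsilon_kV_{k-1}|^\alpha+|\varepsilon_kV_{k-1}^{(l-1,*)}|^\alpha)$; splitting by Hölder's inequality in three factors, passing from $V$-increments to $V^{2\lambda}$-increments via $|a-b|\le|a^{2\lambda}-b^{2\lambda}|^{1/(2\lambda)}$ (legitimate since $\lambda\ge 1/2$), and invoking the geometric bound above, one obtains $\vartheta_l^*(p)\lesssim\gamma_c^{c_0 l}$ for some $c_0>0$, which is summable against $l^2$; the lower bounds on $q$ (notably $q>(\alpha\vee\beta)/\lambda$) are exactly what make the Hölder budget close. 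Condition \Btwo{} is equally direct: since $V_{l-1}\in\mathcal{E}_{l-1}$ and $\varepsilon_l\perp\mathcal{E}_{l-1}$, $\E_{\mathcal{E}_{l-1}}e^{\ic\xi X_l}=\psi_\xi(V_{l-1})$ with $\psi_\xi(v)=\E e^{\ic\xi f(\varepsilon v)}$ continuous in $v$ (dominated convergence, $f$ continuous), so by \Vtwo{} and compactness $\sup_{v\in\mathcal{V}}|\psi_\xi(v)|<1$, and splitting over $\{V_0\in\mathcal{V}\}$ with $\P(V_0\in\mathcal{V})>0$ gives $\E|\psi_\xi(V_0)|<1$.

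The substance of the argument is \Bone{}. Fix $\delta>0$, $l$, and let $\delta_0$ be the threshold of Assumption \ref{ass:garch}; for $\delta'\le\delta_0$ and $N\in\N$ to be chosen, put $\mathcal{A}=\{|\varepsilon_j|\le\delta',\ l\le j\le l+N\}$, which lies in $\mathcal{G}_l$ with $\P(\mathcal{A})=\P(|\varepsilon|\le\delta')^{N+1}>0$ by \Vone{}. Take $X_k^+=f(\varepsilon_kV_{k-1}^+)$, where $V_{k-1}^+$ is the volatility obtained by restarting the augmented-Garch recursion at time $l$ from a deterministic initialisation and driving it by $\varepsilon_l,\varepsilon_{l+1},\ldots$, so that $X_k^+\in\mathcal{G}_l$. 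On $\mathcal{A}$ the bounds $\sup_{|x|\le\delta_0}c_i(x)\le\|c_i(\varepsilon)\|_q$ and $\sup_{|x|\le\delta_0}g_i(x)\le C_g$, together with $\sum_i\|c_i(\varepsilon)\|_q=\gamma_c<1$, yield a \emph{deterministic} bound on the volatility generated inside the block and a \emph{deterministic} contraction, by a factor $\gamma_c^{\sim N/\mathfrak{r}}$, of the influence of the pre-$l$ innovations as it is transmitted across the block. Unrolling the recursion for $V_{k-1}^+$ and decomposing $V_{k-1}^{2\lambda}$ into the part not touching indices $<l$ and the remainder, each written as a deterministic prefactor times a random factor depending only on innovations outside $\{l,\ldots,l+N\}$ — hence independent of $\mathcal{A}$ — one gets, for $k>l+N$, $\E[\ind_{\mathcal{A}}|X_k-X_k^+|]\le C\,\P(\mathcal{A})\,\gamma_c^{c_1(k-l)}$ for some $c_1>0$, the point being that $\P(\mathcal{A})$ enters \emph{linearly} because $\ind_{\mathcal{A}}$ multiplies only deterministically controlled quantities. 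For the finitely many $l\le k\le l+N$ one instead uses $|\varepsilon_k|\le\delta'$ on $\mathcal{A}$ together with \eqref{eq:hoelder:function} to get $\E[\ind_{\mathcal{A}}|X_k-X_k^+|]\le C\,\P(\mathcal{A})\,(\delta')^\beta$. Summing,
\begin{align*}
\E\Big[\ind_{\mathcal{A}}\sum_{k\ge l}\E_{\mathcal{G}_l}|X_k-X_k^+|\Big]\le C\,\P(\mathcal{A})\big(\gamma_c^{c_1 N}+(N+1)(\delta')^\beta\big);
\end{align*}
choosing first $N$ with $C\gamma_c^{c_1 N}<\delta/2$ and then $\delta'\le\delta_0$ with $C(N+1)(\delta')^\beta<\delta/2$, Markov's inequality gives $\P(\mathcal{A}\cap\{\sum_{k\ge l}\E_{\mathcal{G}_l}|X_k-X_k^+|\le\delta\})>0$, which is \Bone{}.

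The main obstacle is exactly this last step, and within it the requirement that the conditioning probability $\P(\mathcal{A})$ appear \emph{linearly} in both the ``deep'' ($k>l+N$) and ``shallow'' ($l\le k\le l+N$) estimates; otherwise the block length $N$ (needed to damp the distant past) and the block size $\delta'$ (needed to kill the $O(N)$ shallow terms) would be linked through $\P(\mathcal{A})=\P(|\varepsilon|\le\delta')^{N+1}$ in a self-defeating way, and the deterministic contraction furnished by $\sup_{|x|\le\delta_0}c_i(x)\le\|c_i(\varepsilon)\|_q$ with $\gamma_c<1$ is precisely what decouples the two choices. With \Bone{} and \Btwo{} in hand, Assumption \ref{ass:smallball:I} holds, and together with Assumption \ref{ass:dependence:main} this makes Theorem \ref{thm:edge:smallball} and Corollary \ref{cor:wasser:small:ball} applicable.
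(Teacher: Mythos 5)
Your proposal is correct and follows essentially the same route as the paper: the same truncated/restarted volatility proxy $V_k^+$ for \hyperref[B1]{\Bone}, a block of small innovations (via \hyperref[V1]{\Vone}) combined with the contraction $\sup_{|x|\le\delta_0}c_i(x)\le\|c_i(\varepsilon)\|_q$, $\gamma_c<1$, and a Markov-type positivity argument, plus the same continuity/compactness argument for \hyperref[B2]{\Btwo} and the Hölder-plus-geometric-decay computation for \hyperref[A1]{\Aone}--\hyperref[A2]{\Atwo}. The only cosmetic difference is bookkeeping: the paper exploits the geometric decay in $k$ also inside the block (so its block bound is uniform in the block length and it picks $\eta$ first, then $j_0$), whereas you bound the shallow terms crudely by $(N+1)(\delta')^\beta$ and pick $N$ before $\delta'$ — both orderings decouple the choices correctly.
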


\subsection{Iterated random functions}\label{sec:iterated}

An iterated random function system on the state space $\R$ is defined as
\begin{align}
V_k = F_{\epsilon_k}\big(V_{k-1} \big), \quad k \in \N,
\end{align}
where $\epsilon_k \in \R$ are i.i.d. with $\varepsilon \stackrel{d}{=} \epsilon_k$. Here, $F_{\varepsilon}(\cdot) = F(\cdot, \varepsilon)$ is the $\varepsilon$-section of a jointly
measurable function $F : \R \times \R \to \R$. Many dynamical systems, Markov processes and non-linear time series are within this framework, see for instance ~\cite{diaconis_freedman_1999}. For $y \in \R$, let
$V_k(y) = F_{\epsilon_k} \circ F_{\epsilon_{k-1}} \circ \ldots \circ F_{\epsilon_0}(y)$, and, given $y,y' \in \mathcal{Y}$ and $\gamma > 0$, we say that the system is $\gamma$-\textit{moment contracting} if
\begin{align}\label{ex:iterated:contraction}
\sup_{y,y'}\E \big|V_k(y) - V_k(y')\big|^{\gamma} \leq C \rho^k, \quad \rho \in (0,1).
\end{align}
We note that some variations exist in the literature. A key quantity for verifying the moment contraction \eqref{ex:iterated:contraction} is
\begin{align*}
L_{\varepsilon} = \sup_{y \neq y'}\frac{\big|F_{\varepsilon}(y)- F_{\varepsilon}(y')\big|}{|y - y'|}.
\end{align*}
Essentially (subject to some mild regularity conditions), \eqref{ex:iterated:contraction} holds if $\E L_{\varepsilon}^{\gamma} < \infty$ and $\E \log L_{\varepsilon} < 0$, see ~\cite{diaconis_freedman_1999}, ~\cite{wu_shao_iterated_2004}. Note that \eqref{ex:iterated:contraction} implies $\vartheta_{k}^{\ast}(\gamma) \leq C \rho^k$ for $\gamma \geq 1$. 

\begin{assumption}\label{ass:iterated}
Let $q > \max\{3(\alpha + \beta),1\}$ ($\alpha, \beta$ as in \eqref{eq:hoelder:function}). There exists $\delta > 0$, such that $\sup_{|\varepsilon|\leq \delta} L_{\varepsilon} < 1$. Moreover, we have $\|L_{\varepsilon}\|_q < 1$ and $\E |F_{\varepsilon}(0)|^q, \E |\varepsilon|^{2 \vee q} < \infty$.
\end{assumption}

We then have the following result.
\begin{theorem}\label{thm:iterated}
Grant Assumptions \ref{ass:smallball:vola} and \ref{ass:iterated}. Then both Assumptions \ref{ass:dependence:main} and \ref{ass:smallball:I} hold. In particular, Theorem \ref{thm:edge:smallball} and Corollary \ref{cor:wasser:small:ball} apply.
\end{theorem}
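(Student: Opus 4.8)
The plan is to reduce the statement to verifying Assumptions~\ref{ass:dependence:main} and~\ref{ass:smallball:I}, after which Theorem~\ref{thm:edge:smallball} and Corollary~\ref{cor:wasser:small:ball} apply verbatim. I would first clear the routine parts. Condition~\Athree\ is assumed outright in Assumption~\ref{ass:smallball:vola}, and the centering in~\Aone\ belongs to the volatility setup. For the moment bound in~\Aone\ and for~\Atwo, I would combine the growth and increment bounds $|f(x)|\le|f(0)|+L|x|^\beta+L|x|^{\alpha+\beta}$ and $|f(x)-f(y)|\le L|x-y|^\beta(1+|x|^\alpha+|y|^\alpha)$ from~\eqref{eq:hoelder:function} with the identity $Y_k-Y_k^{(l,\ast)}=\varepsilon_k\big(V_{k-1}-V_{k-1}^{(l,\ast)}\big)$, the geometric decay of $\|V_{k-1}-V_{k-1}^{(l,\ast)}\|_r$ in $l$ (from the moment contraction~\eqref{ex:iterated:contraction}), the bound $\|V_0\|_q<\infty$, and Hölder's inequality, splitting $1/p=\beta/q+\alpha/q$ so that $p=q/(\alpha+\beta)>3$ because $q>3(\alpha+\beta)$. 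This gives $\E|X_k|^p<\infty$ and $\vartheta_k^{\ast}(p)\le C\tilde\rho^{\,k}$ with $\tilde\rho\in(0,1)$, hence $\sum_k k^2\vartheta_k^{\ast}(p)<\infty$. Condition~\Btwo\ is immediate: since $V_{l-1}\in\mathcal E_{l-1}$ and $\varepsilon_l$ is independent of $\mathcal E_{l-1}$, one has $\E_{\mathcal E_{l-1}}e^{\ic\xi X_l}=h_\xi(V_{l-1})$ with $h_\xi(v)=\E e^{\ic\xi f(\varepsilon v)}$, and~\Vtwo\ gives $|h_\xi(v)|<1$ for every $v$ in the set $\mathcal V$, which has positive $V_0$-probability, while $|h_\xi|\le1$ always, so $\E|h_\xi(V_{l-1})|<1$.

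The real work is~\Bone. Fix $l$ and $\delta>0$. I would build the surrogate process by hand: pick $\delta_1\in(0,\delta^\ast]$ with $\kappa:=\sup_{|\varepsilon|\le\delta_1}L_\varepsilon<1$ (possible by Assumption~\ref{ass:iterated}), take $\tilde V_{l-1}$ to be a bounded $\mathcal G_l$-measurable surrogate for $V_{l-1}$ — the stationary version driven by $(\varepsilon_{l-1}',\varepsilon_{l-2}',\dots)\in\mathcal E_l'$ restricted to $\{|\tilde V_{l-1}|\le R_0\}$, or simply $\tilde V_{l-1}=0$ — and propagate it forward by $\tilde V_j=F_{\varepsilon_j}(\tilde V_{j-1})$, $X_k^+=f(\varepsilon_k\tilde V_{k-1})$ for $k\ge l$, which are all $\mathcal G_l$-measurable. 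Let $E$ be the event that $|\varepsilon_j|\le\delta_1$ for $l\le j<l+m$, intersected with $\{|\tilde V_{l-1}|\le R_0\}$ and, if necessary, with $\{\max_{l\le j<l+m}|F_{\varepsilon_j}(0)|\le R_1\}$ so that $\tilde V_l,\dots,\tilde V_{l+m-1}$ stay within an absolute constant. On $E$ the processes $V$ and $\tilde V$ run through the \emph{same} maps, so $|V_{k-1}-\tilde V_{k-1}|\le\big(\prod_{j=l}^{k-1}L_{\varepsilon_j}\big)|V_{l-1}-\tilde V_{l-1}|$; since $\tilde V_{l-1}$ and the $L_{\varepsilon_j}$ are $\mathcal G_l$-measurable whereas $V_{l-1}$ is independent of $\mathcal G_l$, $\E_{\mathcal G_l}$ only averages $V_{l-1}$ over its stationary law, and~\eqref{eq:hoelder:function} with the conditional Hölder inequality (the exponents closing up because $q>3(\alpha+\beta)$) gives, on $E$,
\[
\E_{\mathcal G_l}\big|X_k-X_k^+\big|\ \le\ C\,|\varepsilon_k|^{\alpha+\beta}\Big(\prod_{j=l}^{k-1}L_{\varepsilon_j}\Big)^{\beta}\big(1+P(\varepsilon_l,\dots,\varepsilon_{k-1})\big),
\]
with $P$ a fixed polynomial and $C$ depending only on $R_0$, $R_1$ and finitely many moments of $V_0$ and $\varepsilon$.

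I would then split $T_l:=\sum_{k\ge l}\E_{\mathcal G_l}|X_k-X_k^+|$ at $k=l+m$. On $E$ the first $m$ innovations are $\le\delta_1$, so the head is bounded by $C\delta_1^{\alpha+\beta}\sum_{j\ge0}\kappa^{\beta j}\le C'\delta_1^{\alpha+\beta}$, which is $\le\delta/2$ once $\delta_1$ is small (the geometric sum is bounded since $\kappa\le\kappa^\ast<1$, the constant from Assumption~\ref{ass:iterated}). Bounding the controlled innovations $\varepsilon_l,\dots,\varepsilon_{l+m-1}$ on $E$ by a constant, the tail is bounded by $C\kappa^{\beta m}W_m$, where $W_m$ is a.s.\ finite, depends only on $(\varepsilon_j)_{j\ge l+m}$, satisfies $\E W_m<\infty$ and $W_m\stackrel{d}{=}W_0$ — here one uses $\E L_\varepsilon^r\le(\E L_\varepsilon^q)^{r/q}<1$ for $r\le q$ and, again, $q>3(\alpha+\beta)$ to absorb the extra $\varepsilon$- and $P$-moments. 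By Markov, $\P(C\kappa^{\beta m}W_m>\delta/2)\le 2C\,\E W_0\,\kappa^{\beta m}/\delta\to0$. The key observation is that $W_m$ is a function of $(\varepsilon_j)_{j\ge l+m}$ only while $E$ is a function of $(\varepsilon_j)_{l\le j<l+m}$ and the copies $(\varepsilon_k')$, so the two are independent; hence for $m$ large $\P\big(E\cap\{C\kappa^{\beta m}W_m\le\delta/2\}\big)=\P(E)\,\P(C\kappa^{\beta m}W_m\le\delta/2)>0$, and on that event $T_l\le\delta$. This establishes~\Bone.

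The hard part is exactly~\Bone, and the obstacle is structural rather than computational: the moment contraction~\eqref{ex:iterated:contraction} controls $V_k$ only in distribution, not pathwise, so $T_l$ need not be small almost surely and must instead be forced small on a carefully chosen positive-probability event. The decoupling above — manufacture a genuinely contracting ``head'' from a long run of small innovations, then bank on the independent ``tail'' being atypically small at a cost that tends to $0$ — is the crux. What remains is the polynomial moment bookkeeping behind the displayed bound and the verification that the exponents afforded by $q>3(\alpha+\beta)$ really do suffice throughout.
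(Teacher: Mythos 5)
Your proposal is correct and is essentially the paper's own argument: the paper proves Theorem~\ref{thm:iterated} by setting $V_k^+ = F_{\varepsilon_k}\circ\cdots\circ F_{\varepsilon_l}(0)$ and repeating the proof of Theorem~\ref{thm:garch} almost verbatim, which is exactly the structure you spell out — \Atwo\ via the H\"older lemma and geometric decay from $\|L_\varepsilon\|_q<1$, \Btwo\ via conditioning on $V_{l-1}$ and \Vtwo, and \Bone\ via a $\mathcal{G}_l$-measurable surrogate iteration, an event of $m$ small innovations forcing pathwise contraction (using $\sup_{|\varepsilon|\le\delta}L_\varepsilon<1$) to kill the head, and a Markov/independence bound for the tail. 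The minor variations (starting the surrogate at $0$ versus a truncated copy, and handling the tail by unconditional Markov plus independence rather than the conditional-Markov bound in \eqref{eq:lower:bound:key}) are inessential.
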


\subsection{Functions of linear processes}\label{sec:linear}

Linear processes and their transformations are key in time series analysis, see e.g.~\cite{timeseriesbrockwell}.
For $\alpha_k \in \R$ and real-valued functions $c_i(\cdot)$, we formally define the linear process
\begin{align*}
G_k = \sum_{i = 0}^{\infty} a_i c_i(\epsilon_{k-i}), \quad k \in \Z.
\end{align*}
Let $g \in H(L, \gamma,\delta)$, and consider
\begin{align*}
V_k = g\bigl(G_k\bigr), \quad k \in \Z.
\end{align*}
The following is our main assumption.

\begin{assumption}\label{ass:linear}
Let $q > \max\{3(\alpha + \beta)(\gamma+\delta), 1\}$ ($\alpha, \beta$ as in \eqref{eq:hoelder:function}) and suppose that $\sup_{i}\|c_i(\varepsilon)\|_q < \infty$, $\sum_{i \geq 1} i^2 |a_i|^{\beta \delta} < \infty$ and $\E |\varepsilon|^{2(\alpha \vee 1)} < \infty$.
\end{assumption}

We then have the following result.
\begin{theorem}\label{thm:linear}
Grant Assumptions \ref{ass:smallball:vola} and \ref{ass:linear}. Then both Assumptions \ref{ass:dependence:main} and \ref{ass:smallball:I} hold. In particular, Theorem \ref{thm:edge:smallball} and Corollary \ref{cor:wasser:small:ball} apply.
\end{theorem}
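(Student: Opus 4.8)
The plan is to verify the two bundles of hypotheses separately: Assumption~\ref{ass:dependence:main} (moments and physical-dependence decay) and Assumption~\ref{ass:smallball:I} (the small-ball condition \hyperref[B1]{\Bone} and the non-lattice condition \hyperref[B2]{\Btwo}), and then invoke Theorem~\ref{thm:edge:smallball} and Corollary~\ref{cor:wasser:small:ball} directly. For Assumption~\ref{ass:dependence:main}, the moment bound \hyperref[A1]{\Aone} and the variance condition \hyperref[A3]{\Athree} are immediate: $\ss^2>0$ is assumed in Assumption~\ref{ass:smallball:vola}, and $\E|X_k|^p<\infty$ for some $p>3$ follows by combining the H\"older condition \eqref{eq:hoelder:function} on $f$ with the moment condition $\sup_i\|c_i(\varepsilon)\|_q<\infty$, finiteness of sufficiently many moments of $\varepsilon$ (implicit through \eqref{eq:vola:model} and the constraint $q>\max\{3(\alpha+\beta)(\gamma+\delta),1\}$), and the summability $\sum_k k^2|a_k|^{\beta\delta}<\infty$, which in particular forces $a_k\to 0$ and hence $G_k$, $V_k$, $Y_k$, $X_k$ to have the required moments. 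The crux of this part is \hyperref[A2]{\Atwo}: one needs $\sum_{k}k^2\vartheta_k^{\ast}(p)<\infty$. Here I would bound $\|X_k-X_k^{(l,\ast)}\|_p$ by first replacing $\varepsilon_{k-l},\varepsilon_{k-l-1},\ldots$ with their primed copies inside $G_{k-1}$ (equivalently inside $V_{k-1}$), using the Lipschitz-type control $g\in H(L,\gamma,\delta)$ to pass from $|G_{k-1}-G_{k-1}^{(l,\ast)}|$ to $|V_{k-1}-V_{k-1}^{(l,\ast)}|$, then from the H\"older property of $f$ to $|X_k-X_k^{(l,\ast)}|$, with the polynomial prefactors $(1+|x|^\alpha+\cdots)$ absorbed via H\"older's inequality at the cost of raising the exponent $q$ — exactly why the assumption demands $q>3(\alpha+\beta)(\gamma+\delta)$. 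Since $|G_{k-1}-G_{k-1}^{(l,\ast)}|\le \sum_{i\ge l}|a_i|\,|c_i(\varepsilon_{k-1-i})-c_i(\varepsilon_{k-1-i}')|$, the resulting dependence coefficient is controlled by a tail sum of $|a_i|^{\beta\delta}$ (after taking the $\beta\delta$-th power inside and using $\gamma\le 1$ or splitting when $\gamma+\delta\le1$ versus $>1$), and $\sum_k k^2(\sum_{i\ge k}|a_i|^{\beta\delta})<\infty$ follows from $\sum_k k^2|a_k|^{\beta\delta}<\infty$ by a standard Fubini/Abel rearrangement.

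For Assumption~\ref{ass:smallball:I}, condition \hyperref[B2]{\Btwo} is the non-lattice condition, and I would deduce it from \hyperref[V2]{\Vtwo}: conditionally on $\mathcal{E}_{l-1}$ the randomness in $X_l=f(\varepsilon_l V_{l-1})$ comes only from $\varepsilon_l$, with $V_{l-1}\in\mathcal{E}_{l-1}$ acting as a frozen constant $v$; since $\P(V_{l-1}\in\mathcal V)>0$ and $|\E e^{\ic\xi f(\varepsilon v)}|<1$ for $v\in\mathcal V$ and $\xi\ne0$, averaging gives $\E|\E_{\mathcal{E}_{l-1}}e^{\ic\xi X_l}|<1$ strictly. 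The substantive part is the small-ball condition \hyperref[B1]{\Bone}. The construction of the comparison family $(X_k^+)\in\mathcal G_l=\sigma(\mathcal E_l^+\cup\mathcal E_l')$ is the natural one: replace the innovations $\varepsilon_{l},\varepsilon_{l-1},\ldots$ appearing with index $\le l$ by the independent primed copies $\varepsilon_j'$, i.e. set $X_k^+ = f(Y_k^+)$ with $Y_k^+=\varepsilon_k V_{k-1}^+$ where $V_{k-1}^+$ is built from $\varepsilon_{k-1},\ldots,\varepsilon_{l+1},\varepsilon_l',\varepsilon_{l-1}',\ldots$. Then $\sum_{k\ge l}\E_{\mathcal G_l}|X_k-X_k^+|$ is small precisely on the event that the linear process driven by $(\varepsilon_j)_{j\le l}$ and the one driven by $(\varepsilon_j')_{j\le l}$ are both small in the relevant $\ell^1$-weighted sense. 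Using \hyperref[V1]{\Vone}, $\P(|\varepsilon|\le\delta)>0$, together with the tail summability $\sum_k k^2|a_k|^{\beta\delta}<\infty$ (in fact just summability of $|a_k|^{\beta\delta}$), one shows that the event $\{$all $\varepsilon_j$ and $\varepsilon_j'$ with $j$ in a suitable window are $\le$ some small threshold$\}$ has positive probability and forces $\sum_{k\ge l}\E_{\mathcal G_l}|X_k-X_k^+|\le\delta$; the H\"older/contraction estimates from the \hyperref[A2]{\Atwo} step are reused here to propagate smallness of the innovations into smallness of $|X_k-X_k^+|$. Positivity of the probability is where independence of the $\varepsilon_j$ is essential — a product of countably many positive-probability events needs the events to stabilize, which is guaranteed by the geometric-type decay coming from $|a_k|^{\beta\delta}$ being summable and the window only needing finitely many "new" constraints beyond a tail that is automatically small.

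I expect the main obstacle to be the bookkeeping in \hyperref[B1]{\Bone}: one must choose the truncation level $\delta'$ for the innovations and the window length $N$ so that (i) the contribution of innovations outside the window to $\sum_{k\ge l}\E_{\mathcal G_l}|X_k-X_k^+|$ is already $\le\delta/2$ deterministically (or with probability one) by tail summability, and (ii) on the positive-probability event $\{|\varepsilon_j|\le\delta',\ |\varepsilon_j'|\le\delta'\text{ for }j\text{ in the window}\}$ the within-window contribution is also $\le\delta/2$; the interplay with the polynomial weights $1+|x|^\alpha+|y|^\alpha$ in \eqref{eq:hoelder:function} means the truncation must also keep these weights bounded, and one has to be careful that the primed innovations outside the window (which are not truncated) still yield finite conditional expectations — this is handled by the moment assumptions, but requires splitting $X_k-X_k^+$ into a "both small" main term and an "error" term controlled in $L^1$ by the dependence coefficients. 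Once \hyperref[B1]{\Bone} and \hyperref[B2]{\Btwo} are in hand, Assumptions~\ref{ass:dependence:main} and \ref{ass:smallball:I} hold, so Theorem~\ref{thm:edge:smallball} gives the Kolmogorov expansion together with the characteristic-function bound \eqref{eq:thm:edge:smallball:condition}, and then Corollary~\ref{cor:wasser:small:ball} (equivalently Theorem~\ref{thm:wasser:general}) gives the $W_1$ statement, completing the proof.
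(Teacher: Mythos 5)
Your overall architecture (check \hyperref[A1]{\Aone}--\hyperref[A3]{\Athree}, \hyperref[B1]{\Bone}, \hyperref[B2]{\Btwo}, then cite Theorem~\ref{thm:edge:smallball} and Corollary~\ref{cor:wasser:small:ball}) is the same as the paper's, which proves this theorem by setting $G_k^+=\sum_{i=0}^{k-1}a_ic_i(\epsilon_{k-i})$, $V_k^+=g(G_k^+)\in\mathcal{E}_1^+$ and repeating the Garch argument; your \hyperref[B2]{\Btwo} verification also matches. The genuine gap is in your mechanism for \hyperref[B1]{\Bone}. The random variable $\sum_{k\geq l}\E_{\mathcal{G}_l}|X_k-X_k^+|$ is $\mathcal{G}_l$-measurable, and the unprimed past innovations $(\varepsilon_j)_{j<l}$ are independent of $\mathcal{G}_l$ and are averaged out by $\E_{\mathcal{G}_l}$; an event requiring them to be small is therefore irrelevant and cannot ``force'' the bound, contrary to your description of the small-ball event as ``the linear process driven by $(\varepsilon_j)_{j\le l}$ and the one driven by $(\varepsilon_j')_{j\le l}$ are both small.'' Concretely, for $k$ close to $l$ one has $\E_{\mathcal{G}_l}|X_k-X_k^+|\asymp|\varepsilon_k|^{\beta}\,\E\big[|g(G_{k-1})-g(G_{k-1}^+)|^{\beta}(\cdots)\big]$, where the expectation over the unprimed past is an order-one quantity no matter which $\mathcal{G}_l$-measurable event you intersect with; the only handle is to make the \emph{future} multiplicative innovations $\varepsilon_l,\ldots,\varepsilon_{l+j}$ small, which is exactly where \hyperref[V1]{\Vone} enters in the paper (the analogue of the event $\mathcal{B}_{1j\eta}$ and the lower bound \eqref{eq:lower:bound:key}, with Markov's inequality for the indices $k>l+j$). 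Your item (i) is also not ``deterministic'': the far terms still carry the unbounded factors $|\varepsilon_k|^{\beta}$ and $|\varepsilon_k|^{\alpha+\beta}$, so the tail must be disposed of by a Markov bound on a positive-probability event, not by tail summability alone. With these corrections your primed-replacement choice of $X_k^+$ is admissible (it lies in $\mathcal{G}_l$), but the paper's truncation $V_k^+=g(G_k^+)$ is simpler since it leaves no primed tail to control.

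A second, smaller flaw is the Fubini step in \hyperref[A2]{\Atwo}. Since $X_k^*$ replaces the entire past, your own reduction gives $\vartheta_k^*(p)\lesssim\sum_{i\geq k}|a_i|^{\beta\delta}$ (a tail sum), and
\begin{align*}
\sum_{k\geq1}k^2\sum_{i\geq k}|a_i|^{\beta\delta}=\sum_{i\geq1}|a_i|^{\beta\delta}\sum_{k\leq i}k^2\asymp\sum_{i\geq1}i^3|a_i|^{\beta\delta},
\end{align*}
so the claimed implication from $\sum_k k^2|a_k|^{\beta\delta}<\infty$ is false in general (take $|a_k|^{\beta\delta}=k^{-4}$: then $\sum_k k^2|a_k|^{\beta\delta}<\infty$ while $\sum_k k^2\sum_{i\geq k}|a_i|^{\beta\delta}=\infty$). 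You need the $k^2$-weighted summability of the tails (equivalently a $k^3$-type weight on $|a_k|^{\beta\delta}$), or the hypothesis must be read as a condition on the tail sums; as written, this step of your argument does not follow.
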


\subsection{Volterra processes}\label{sec:volterra}

In the study of nonlinear processes, Volterra processes are of
fundamental importance, see for instance~\cite{bendat:1990},~\cite{priestley:1988} or ~\cite{rugh:1981}. We consider
\begin{eqnarray*}
&& V_k = \sum_{i = 1}^{\infty} \sum
_{0 \leq j_1 <\cdots<j_i} a_k (j_1,\ldots,
j_i ) \epsilon_{k - j_1} \cdots\epsilon_{k-j_i},
\end{eqnarray*}
where $ \|\epsilon_k \|_p < \infty$ for $p \geq 2$, and $a_k$ are called the $k$-th Volterra kernel. Let
\begin{eqnarray*}
&& A_{k,i} = \sum_{k \in\{j_1,\ldots,j_i\}, 0 \leq j_1 <\cdots<j_i} \bigl|a_k
(j_1,\ldots, j_i ) \bigr|.
\end{eqnarray*}
Then by the triangle inequality, there exists a constant $C$ such that
\begin{eqnarray*}
&& \bigl\|V_k - V_k^* \bigr\|_p \leq C \sum
_{i = 1}^{\infty} \| \epsilon_0
\|_p^{i} \sum_{l \geq k} A_{l,i}.
\end{eqnarray*}

We thus require the following assumption.

\begin{assumption}\label{ass:volterra}
Let $\E |\varepsilon|^{2 \vee q} < \infty$, $q > \max\{3(\alpha + \beta)\}$ ($\alpha, \beta$ as in \eqref{eq:hoelder:function}), such that
\begin{align*}
\sum_{k \geq 1} k^2 \Big(\sum
_{i = 1}^{\infty} \| \epsilon_0
\|_q^{i} \sum_{l \geq k} A_{l,i} \Big)^{\beta} < \infty.
\end{align*}
\end{assumption}

We then have the following result.
\begin{theorem}\label{thm:volterra}
Grant Assumptions \ref{ass:smallball:vola} and \ref{ass:volterra}. Then both Assumptions \ref{ass:dependence:main} and \ref{ass:smallball:I} hold. In particular, Theorem \ref{thm:edge:smallball} and Corollary \ref{cor:wasser:small:ball} apply.
\end{theorem}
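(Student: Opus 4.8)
The plan is to verify Assumptions \ref{ass:dependence:main} and \ref{ass:smallball:I} in turn, since Theorem \ref{thm:edge:smallball} and Corollary \ref{cor:wasser:small:ball} then apply automatically. The structure is identical to the proofs for augmented Garch, iterated random functions and linear processes: the Volterra kernel conditions simply play the role that the contraction conditions play there. Throughout, $V_k$ is a time-homogenous Bernoulli shift in $(\epsilon_j)_{j \le k}$, so $X_k = f(Y_k) = f(\epsilon_k V_{k-1})$ has the structure \eqref{eq_structure_condition}, and the coupling random variables $V_k^*$, $Y_k^*$, $X_k^*$ are defined by replacing $\epsilon_j$ with $\epsilon_j'$ for $j \le 0$.

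\textbf{Step 1: Assumption \ref{ass:dependence:main}.} Condition \hyperref[A3]{\Athree} is assumed directly in Assumption \ref{ass:smallball:vola}. For \hyperref[A1]{\Aone}, I would first bound $\|V_0\|_{q}$: by the triangle inequality $\|V_0\|_q \le \sum_{i\ge 1}\|\epsilon_0\|_q^{\,i}\sum_{0\le j_1<\cdots<j_i} |a_0(j_1,\dots,j_i)|$, which is finite since it is dominated by the $k=1$ term (or a comparable quantity) appearing in Assumption \ref{ass:volterra}; hence $\|Y_0\|_q = \|\epsilon\|_q\|V_0\|_q<\infty$, and then the Hölder condition \eqref{eq:hoelder:function} applied at $y=0$ gives $|f(Y_0)| \lesssim 1 + |Y_0|^{\beta+\alpha}$, so $\E|X_0|^p<\infty$ for $p = q/(\alpha+\beta) > 3$ by the choice $q > 3(\alpha+\beta)$. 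Centring is assumed ($\E X_k = 0$). For \hyperref[A2]{\Atwo}, I would use \eqref{eq:hoelder:function} with $x = Y_k$, $y = Y_k^*$ together with Hölder's inequality (as in the commented-out display): $\|X_k - X_k^*\|_p \lesssim \|Y_k - Y_k^*\|_{p(\alpha+\beta)} = \|Y_k-Y_k^*\|_q$ up to the moment factor $(1+\|Y_k\|_q^{\alpha}+\|Y_k^*\|_q^{\alpha})$ which is bounded. Since $Y_k - Y_k^* = \epsilon_k(V_{k-1}-V_{k-1}^*)$ with $\epsilon_k$ independent of both, $\|Y_k - Y_k^*\|_q = \|\epsilon\|_q \|V_{k-1}-V_{k-1}^*\|_q$, and the displayed triangle-inequality bound in the text gives $\|V_{k-1}-V_{k-1}^*\|_q \le C\sum_{i\ge1}\|\epsilon_0\|_q^{\,i}\sum_{l\ge k-1}A_{l,i}$. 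Therefore $\vartheta_k^*(p) \lesssim \big(\sum_{i\ge1}\|\epsilon_0\|_q^{\,i}\sum_{l\ge k-1}A_{l,i}\big)^{\beta}$ (the exponent $\beta$ coming from the Hölder exponent in \eqref{eq:hoelder:function}), and the series $\sum_k k^2\vartheta_k^*(p)<\infty$ is exactly Assumption \ref{ass:volterra}.

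\textbf{Step 2: Assumption \ref{ass:smallball:I}.} Condition \hyperref[B2]{\Btwo}: since $V_{l-1}\in\mathcal{E}_{l-1}$ and $\epsilon_l$ is independent of $\mathcal{E}_{l-1}$, $\E_{\mathcal{E}_{l-1}}e^{\ic\xi X_l} = \E[e^{\ic\xi f(\epsilon v)}]\big|_{v = V_{l-1}}$, whose modulus is $<1$ on the event $\{V_{l-1}\in\mathcal{V}\}$ by \hyperref[V2]{\Vtwo}, and this event has positive probability; taking expectations of the modulus gives $\E|\E_{\mathcal{E}_{l-1}}e^{\ic\xi X_l}| < 1$. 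Condition \hyperref[B1]{\Bone} is the crux. I would take $X_k^+$ to be the truncated version in which all innovations with index $< l$ are replaced by their primed copies (so $X_k^+ \in \mathcal{G}_l$), i.e. $V_{k-1}^+$ uses $\epsilon_j$ for $l \le j \le k-1$ and $\epsilon_j'$ for $j < l$. Then $\sum_{k\ge l}\E_{\mathcal{G}_l}|X_k - X_k^+|$ is small provided the innovations $\epsilon_j'$, $j<l$, are small: conditionally on $\{|\epsilon_j'|\le\eta \text{ for } j < l\}$ — an event of positive probability for every $\eta>0$ by \hyperref[V1]{\Vone} (applied to the i.i.d. primed copies) — the quantities $V_{k-1}-V_{k-1}^+$ and $Y_k-Y_k^+$ become uniformly small because each summand in the Volterra expansion for $V_{k-1}-V_{k-1}^+$ contains at least one factor $\epsilon_j'$ with $j<l$, so its conditional $L^1$ norm is bounded by $\eta$ times the corresponding kernel sum, and $\sum_{k\ge l}A_{k-1,i}$-type tails are summable by Assumption \ref{ass:volterra}; applying \eqref{eq:hoelder:function} once more transfers this smallness from $Y$ to $X = f(Y)$ (using a uniform moment bound that survives the conditioning). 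Choosing $\eta$ small enough forces $\sum_{k\ge l}\E_{\mathcal{G}_l}|X_k-X_k^+| \le \delta$ on a set of positive probability, which is \hyperref[B1]{\Bone}.

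\textbf{Main obstacle.} The routine parts are the moment and dependence bookkeeping in Step 1. The genuine difficulty is Step 2, specifically making the conditioning argument for \hyperref[B1]{\Bone} rigorous: one must choose the comparison family $X_k^+$ so that it is genuinely $\mathcal{G}_l$-measurable, control $\sum_{k\ge l}\E_{\mathcal{G}_l}|X_k - X_k^+|$ uniformly on an event of positive probability, and handle the polynomial moment weight $(1+|Y_k|^\alpha + |Y_k^+|^\alpha)$ in \eqref{eq:hoelder:function} — which is not bounded pointwise — by splitting off a further positive-probability event on which a suitable finite collection of innovations is moderate, or by a truncation/Markov argument. This is exactly the step where the Volterra structure (every term in $V_k - V_k^+$ carrying a small primed factor, with summable kernel tails) is used, and it parallels the corresponding steps in the proofs of Theorems \ref{thm:garch}, \ref{thm:iterated} and \ref{thm:linear}.
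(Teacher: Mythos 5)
Your Step 1 and your verification of \hyperref[B2]{\Btwo} are fine and match what the paper does (Lemma \ref{lem:hoelder} plus the displayed triangle-inequality bound for $\|V_k-V_k^*\|_q$, and the compactness argument from the proof of Theorem \ref{thm:garch}). The gap is in \hyperref[B1]{\Bone}. With your choice of $X_k^+$ (replace all innovations of index $<l$ by primed copies), the smallness of $\sum_{k\ge l}\E_{\mathcal{G}_l}|X_k-X_k^+|$ is argued on the event $\{|\epsilon_j'|\le\eta \text{ for all } j<l\}$, and you claim this has positive probability by \Vone. That is false in general: it is an intersection over \emph{infinitely many} i.i.d. primed innovations, so its probability is $\prod_{j<l}\P(|\varepsilon|\le\eta)=0$ unless $\P(|\varepsilon|\le\eta)=1$, which is not assumed. \Vone only buys you positive probability for a \emph{finite} collection of innovations, and with your coupling the infinitely many remaining primed variables still sit inside $V_{k-1}^+$ and hence inside the $\mathcal{G}_l$-measurable quantity you must control, so the argument as written does not close; one would at least need an additional Markov/bounded-weight event for the primed part, which you do not supply (your ``main obstacle'' paragraph addresses the unbounded H\"older weight, a different issue).

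The paper avoids this entirely: its $V_k^+$ is the \emph{truncated} Volterra series, dropping every term that involves an innovation of index below the cut-off, so $X_k^+=f(\epsilon_k V_{k-1}^+)$ lies in $\mathcal{E}_l^+$ and no primed variables are needed. Then, exactly as in the proof of Theorem \ref{thm:garch} (the events $\mathcal{B}_{1j\eta}\subseteq\mathcal{A}_{1j\delta}$ and the lower bound \eqref{eq:lower:bound:key}), one conditions on the \emph{finitely many} recent innovations $\varepsilon_1,\dots,\varepsilon_j$ being $\le\eta$ — positive probability by \Vone — which makes the near terms small through the factor $|\varepsilon_k|^{\beta}\le\eta^{\beta}$ in the H\"older bound, while the far terms $k>j$ are handled by the summable kernel tails $\sum_i\|\epsilon_0\|_q^i\sum_{l\ge k}A_{l,i}$ guaranteed by Assumption \ref{ass:volterra} (the Volterra analogue of the geometric decay in the Garch case). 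If you rework your Step 2 with the truncated $X_k^+$ and the finite-innovation event, the rest of your outline goes through.
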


\section{Proofs of Theorem \ref{thm:edge:smallball} and Theorem \ref{thm:wasser:general}}

Throughout the proof, for notational convenience, we assume for simplicity that we are in the time-homogenous Bernoulli-shift case. This means that $g_k = g$ in \eqref{eq_structure_condition}, and, in particular, the quantities in Assumption \ref{ass:smallball:I} are invariant in $l \in \Z$.\\
\\
For $0 \leq a \leq b$, \hypertarget{TTab:eq5}{define} the \textit{Berry-Esseen tail}
\begin{align}
\Td_{a}^{b}(x) = \int_{a \leq |\xi| \leq b} e^{-\ic \xi x}\E\bigl[e^{\ic \xi S_n/\sqrt{n}} \bigr] \Bigl(1 - \frac{|\xi|}{b}\Bigr) \frac{1}{\xi} d \, \xi,
\end{align}
which arises naturally in Berry's smoothing inequality. Using $\Td_{a}^{b}$, for $a > 0$, we consider \hypertarget{ca:eq5}{the} \textit{Berry-Esseen characteristic}
\begin{align}\label{berry_esseen_char}
\Cd_{a} = \inf_{b \geq a}\Big(\sup_{x \in \R}\bigl|\Td_{a}^{b}(x) \bigr| + 1/b\Big).
\end{align}
For $m \in \N$, define the following $\sigma$-algebra
\begin{align}\label{defn_sigma_algebra}
\FF_m = \sigma\bigl(\epsilon_{-m+1},...,\epsilon_0,\epsilon_{m+1},...,\epsilon_{2m},\epsilon_{3m+1},...\bigr).
\end{align}
Moreover, for $1 \leq j \leq n$, let $(\epsilon_k^{(j)})_{k \in \Z}$ be independent copies of $(\epsilon_k)_{k \in \Z}$. For each $2(j-1)m + 1 \leq k \leq 2jm$, define
\begin{align}
X_{km} = f(\epsilon_k, \epsilon_{k-1}, \ldots, \epsilon_{k-m+1}, \epsilon_{k-m}^{(j)}, \epsilon_{k-m-1}^{(j)}, \ldots),
\end{align}
and note that $X_k \stackrel{d}{=} X_{km}$. Finally, let us introduce the quantities
\begin{align*}
A_{i:j} = \sum_{k = i}^j X_{km}, \quad B_j = A_{2(j-1)m + 1 : 2jm} =  \sum_{k = 2(j-1)m + 1}^{2jm} X_{km}, \quad S_{nm} = \sum_{k = 1}^n X_{km}.
\end{align*}

We recall parts of Theorem 2.7 in ~\cite{jirak:edge:trans:2021}, which we restate as the following lemma for the sake of reference.

\begin{lemma}\label{lem_edge}
Grant Assumption \ref{ass:dependence:main}. Then there exists $\delta > 0$, such that
\begin{align*}
\sup_{x \in \R}\big|\P\big(S_n \leq x\sqrt{n}\big) - \Psi_n(x)\big| \lesssim n^{-\frac{1}{2} - \delta} + \Cd_{T_n},
\end{align*}
where $T_n \geq c \sqrt{n}$ for some $c > 0$.
\end{lemma}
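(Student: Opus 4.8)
The plan is to follow the classical Fourier-analytic route for Edgeworth expansions, using the block decomposition $S_n\approx\sum_j B_j$ set up above to reduce the analysis near the origin to a sum of conditionally independent summands, while packaging the genuinely high-frequency contribution into the Berry--Esseen characteristic $\Cd_{T_n}$. The starting point is Berry's smoothing inequality in the Fej\'er-kernel form underlying the definition of $\Td_a^b$: since $\Psi_n$ is a signed measure with bounded, smooth density (a centred Gaussian of variance $s_n^2$ plus a fixed Hermite-type correction), the Kolmogorov distance is bounded, up to an absolute constant, by $\sup_x\bigl|\Td_0^b(x)-\Td_0^{b,\Psi}(x)\bigr|+c/b$, where $\Td_0^{b,\Psi}$ is the analogue of $\Td_0^b$ with $\E e^{\ic\xi S_n/\sqrt n}$ replaced by the Fourier transform $\widehat\Psi_n(\xi)=e^{-\xi^2 s_n^2/2}\bigl(1+\tfrac{(\ic\xi)^3}{6}\kappa_n^3\bigr)$ of $\Psi_n$. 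Fixing $T_n=\varepsilon\sqrt n$ and splitting the range at $T_n$, over $|\xi|\ge T_n$ the $\widehat\Psi_n$-part is $\oo(n^{-1/2})$ by Gaussian decay, while the part carried by $\E e^{\ic\xi S_n/\sqrt n}$ is exactly $\Cd_{T_n}$ after optimizing over $b\ge T_n$; it then remains to bound the difference of the two characteristic functions over $|\xi|\le T_n$.

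For the low band $|\xi|\le A_n$ with $A_n\to\infty$ slowly (say $A_n\asymp\sqrt{\log n}$), I would condition on $\FF_m$. By construction the block sums $B_j$ are independent under $\P_{\FF_m}$ and identically distributed for interior $j$, so a classical Edgeworth expansion for sums of independent summands applies to $n^{-1/2}\sum_j B_j$. The second and third cumulants are $s_n^2$ and $\kappa_n^3$ as in \eqref{defn:sn:kappan}, and \Athree\ guarantees $s_n^2\ge\ss^2/2>0$ for $n$ large so that $e^{-\xi^2 s_n^2/2}$ genuinely decays. Writing $\E e^{\ic\xi S_n/\sqrt n}=\exp\bigl(-\tfrac{\xi^2 s_n^2}{2}+\tfrac{(\ic\xi)^3}{6}\kappa_n^3+r_n(\xi)\bigr)$, one has $\kappa_n^3=\OO(n^{-1/2})$ by summability of the third joint cumulants (from \Atwo) and $|r_n(\xi)|\lesssim|\xi|^4 n^{-1}+|\xi|^p n^{1-p/2}$ using only $p>3$ moments; subtracting $\widehat\Psi_n(\xi)$, linearizing the exponential (which produces a further $\OO(|\xi|^6 n^{-1})$ term), and integrating $\xi^{-1}e^{-c\xi^2}$ against these remainders yields a contribution $\lesssim n^{-1/2-\delta}$ with $\delta=\tfrac12\wedge\tfrac{p-3}{2}$.

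Over the middle band $A_n\le|\xi|\le T_n=\varepsilon\sqrt n$ both characteristic functions are already negligible. Using the conditional independence of the $\asymp n/m$ blocks together with a second-order expansion of each factor $\E_{\FF_m}e^{\ic\xi B_j/\sqrt n}$ --- legitimate because $|\xi|/\sqrt n\le\varepsilon$ is small --- each factor is at most $e^{-c\xi^2 m/n}$, so the product is at most $e^{-c\xi^2}\le e^{-cA_n^2}$, which is $\le n^{-1}$ for $A_n\asymp\sqrt{\log n}$; the $\widehat\Psi_n$-part is Gaussian-small as well. Integrating $\xi^{-1}$ over this band thus contributes $\oo(n^{-1/2-\delta})$, and assembling the three bands through the smoothing inequality, with the high band left as $\Cd_{T_n}$, gives the stated bound with $T_n=\varepsilon\sqrt n\ge c\sqrt n$.

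The main obstacle is the dependence bookkeeping hidden in the cumulant control: making the heuristic ``conditionally on $\FF_m$ the blocks behave like independent summands with the correct cumulants'' quantitative at the $\oo(n^{-1/2})$ scale. Three errors must be balanced by the choice of block length $m=m_n$ (for instance $m_n\asymp n^\eta$, $0<\eta<1$): the bias from replacing $X_k$ by its $m$-dependent surrogate $X_{km}$, governed by the tail $\sum_{l\ge m}\vartheta_l^\ast(p)$; the boundary coupling between neighbouring blocks; and the fluctuation of the conditional cumulants of the $B_j$ around $s_n^2$ and $\kappa_n^3$. It is exactly here that the strengthened, $k^2$-weighted summability in \Atwo\ enters: it upgrades ordinary CLT-level summability to the precision needed for $\kappa_n^3=\OO(n^{-1/2})$ and for the truncation and boundary biases to be $\oo(n^{-1/2})$, while $p>3$ fixes the admissible exponent $\delta=\tfrac12\wedge\tfrac{p-3}{2}$. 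Securing the strict improvement $n^{-1/2-\delta}$ rather than merely $\oo(n^{-1/2})$ --- which is what allows the remaining non-lattice difficulty to be isolated entirely into $\Cd_{T_n}$ --- is the delicate endpoint of the argument.
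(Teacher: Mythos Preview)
The paper does not actually prove this lemma: it is quoted as (part of) Theorem~2.7 in~\cite{jirak:edge:trans:2021} and invoked as a black box. Your three-band Fourier strategy is the right picture and is presumably close to how the cited reference proceeds, but there is a concrete gap in your middle-band step. You claim $\bigl|\E_{\FF_m}e^{\ic\xi B_j/\sqrt{n}}\bigr|\le e^{-c\xi^2 m/n}$ via a second-order Taylor expansion, justified by ``$|\xi|/\sqrt{n}\le\varepsilon$ is small''. But $B_j$ is a sum of $2m$ terms with (conditional) variance of order $m$ and third absolute moment of order $m^{3/2}$, so the cubic Taylor remainder is of order $|\xi|^3 m^{3/2} n^{-3/2}$ and dominates the quadratic term $\xi^2 m/n$ as soon as $|\xi|\gtrsim\sqrt{n/m}$. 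Since you need $m=m_n\to\infty$ to kill the $m$-dependent truncation and boundary biases you yourself list, the range $\sqrt{n/m}\lesssim|\xi|\le T_n=\varepsilon\sqrt{n}$ is left uncontrolled; taking $m$ bounded rescues this band but then those approximation errors no longer vanish, so the two constraints on $m$ are in direct conflict.

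Bridging this range is exactly where the hard work in~\cite{jirak:edge:trans:2021} sits. One cannot simply factor the block characteristic function further, because conditionally on $\FF_m$ the summands inside a single $B_j$ are not independent. What is required is a characteristic-function inequality for the conditional block sums that stays effective for arguments up to order one (not merely $o(1)$), obtained for instance through a finer sub-blocking within each $B_j$ together with moment and concentration control of the random conditional variances---and it is here, rather than only in the cumulant bookkeeping you flag in your final paragraph, that the $k^2$-weighted summability in \Atwo\ and the exponent $p>3$ are really spent.
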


In addition, we require the following technical result in the sequel.
\begin{lemma}\label{lem:bound:Snm:difference}
Grant Assumption \ref{ass:dependence:main}. Then there exists $C > 0$, such that
\begin{align*}
\big\|S_n - S_{nm}\big\|_1 \leq C \sqrt{n} m^{-2}.
\end{align*}
\end{lemma}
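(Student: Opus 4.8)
The plan is to estimate $\|S_n - S_{nm}\|_1$ by comparing, term by term, the original variables $X_k = f(\epsilon_k,\epsilon_{k-1},\ldots)$ with their block-truncated surrogates $X_{km}$. Recall that within the $j$-th block, for $2(j-1)m+1 \le k \le 2jm$, the variable $X_{km}$ is obtained from $X_k$ by replacing the innovations $\epsilon_{k-m}, \epsilon_{k-m-1}, \ldots$ (those lying at lag $\ge m$ behind $k$) with the independent copies $\epsilon^{(j)}_{k-m}, \epsilon^{(j)}_{k-m-1}, \ldots$. Thus $X_k$ and $X_{km}$ agree on the first $m$ coordinates $\epsilon_k,\ldots,\epsilon_{k-m+1}$ and are fed independent inputs beyond that. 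First I would observe that this coupling is exactly (distributionally) the one defining the physical-dependence coefficient at lag $m$: by stationarity and the definition \eqref{defn:dep:measure:general},
\begin{align*}
\big\|X_k - X_{km}\big\|_1 \le \big\|X_k - X_k^{(m,\ast)}\big\|_1 \le \vartheta_m^{\ast}(p),
\end{align*}
using $\|\cdot\|_1 \le \|\cdot\|_p$ for $p>3$. (One must be slightly careful that the fresh block copies $\epsilon^{(j)}$ play the role of the $\epsilon'$ in \eqref{defn_strich_depe_2}; since only the joint law of the pair $(X_k, X_{km})$ matters for an $L^1$ norm, this is legitimate.)

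Next I would sum over $k$. Writing $S_n - S_{nm} = \sum_{k=1}^n (X_k - X_{km})$ and applying the triangle inequality in $L^1$,
\begin{align*}
\big\|S_n - S_{nm}\big\|_1 \le \sum_{k=1}^n \big\|X_k - X_{km}\big\|_1 \le n\, \vartheta_m^{\ast}(p).
\end{align*}
This bound, however, is too weak: it gives $n\,\vartheta_m^\ast(p)$ rather than the claimed $\sqrt n\, m^{-2}$. The fix is that within each block the lag at which the truncation occurs is not $m$ but grows: the variable indexed by $k = 2(j-1)m + r$ (with $1\le r\le 2m$) has its innovations replaced starting at lag... wait, no — re-reading the construction, $X_{km}$ always keeps $\epsilon_k,\ldots,\epsilon_{k-m+1}$ and replaces from $\epsilon_{k-m}$ onward, \emph{regardless} of the position of $k$ within its block. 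So the per-term bound is genuinely $\vartheta_m^\ast(p)$ uniformly, and the naive sum is $2Nm\cdot\vartheta_m^\ast(p) \approx n\,\vartheta_m^\ast(p)$.

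The key step — and the main obstacle — is therefore to extract the extra factor $n^{-1/2}$, which cannot come from the triangle inequality but must come from \emph{cancellation}: the increments $D_k := X_k - X_{km}$ are (approximately) martingale-type differences with good decorrelation, so $\E\big(\sum_k D_k\big)^2$ is of order $n$ times a summable series rather than $n^2$. Concretely I would bound
\begin{align*}
\big\|S_n - S_{nm}\big\|_1 \le \Big\| \sum_{k=1}^n D_k \Big\|_2 = \Big( \sum_{k,k'} \E[D_k D_{k'}] \Big)^{1/2},
\end{align*}
and control $\E[D_k D_{k'}]$ for $|k-k'|$ large using that $D_k$ depends (up to an $L^2$-small error) only on innovations in a window of width $O(m)$ around $k$, together with the coupling bound $\|D_k\|_2 \lesssim \vartheta_m^\ast(p)$. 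A standard weak-dependence covariance estimate then yields $\sum_{k'} |\E[D_k D_{k'}]| \lesssim \vartheta_m^\ast(p) \sum_{l} \vartheta_{l\vee m}^\ast(p) \lesssim \vartheta_m^\ast(p)$, so that $\|S_n - S_{nm}\|_1 \lesssim \sqrt{n}\,\vartheta_m^\ast(p)^{1/2}$, or with a more careful split (exploiting that $D_k$ is literally independent of $D_{k'}$ across blocks once the block-index differs, conditionally on the relevant copies) even $\|S_n - S_{nm}\|_1 \lesssim \sqrt{n}\,\vartheta_m^\ast(p)$. Finally, invoking \hyperref[A2]{\Atwo}, which gives $\sum_k k^2\vartheta_k^\ast(p) < \infty$ and hence $\vartheta_m^\ast(p) = o(m^{-3})$, in particular $\vartheta_m^\ast(p) \le C m^{-2}$ (indeed much smaller), we conclude $\|S_n - S_{nm}\|_1 \le C\sqrt{n}\,m^{-2}$, as claimed. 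The delicate point to get right is the conditioning argument that makes the cross-block terms vanish exactly (so that only the $O(m)$ within-window pairs contribute), which is where the specific block structure in \eqref{defn_sigma_algebra} and the independent copies $\epsilon^{(j)}$ are used.
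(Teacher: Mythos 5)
Your instinct that the triangle inequality only yields $n\,\vartheta_m^{\ast}(p)$ and that the missing factor must come from cancellation across $k$ is correct, but the quantitative route you sketch does not reach $\sqrt{n}\,m^{-2}$, and its strongest step is false. The assertion that $D_k$ and $D_{k'}$ are ``literally independent across blocks conditionally on the copies'' is wrong: $X_k$ (unlike $X_{km}$) depends on the entire original past $\epsilon_k,\epsilon_{k-1},\ldots$, so $D_k$ and $D_{k'}$ share original innovations regardless of the block indices; only the $X_{km}$ part is block-localized. Consequently the claimed improvement $\|S_n-S_{nm}\|_1\lesssim \sqrt{n}\,\vartheta_m^{\ast}(p)$ is unsubstantiated, and in fact false in general: for a linear process $X_k=\sum_{i\geq 0}a_i\epsilon_{k-i}$ with $a_i\geq 0$, each original innovation $\epsilon_t$ in the bulk enters $\sum_k D_k$ with total coefficient about $\sum_{i\geq m}a_i$, so $\|S_n-S_{nm}\|_2\gtrsim \sqrt{n}\sum_{i\geq m}a_i$, which exceeds $\sqrt{n}\,\vartheta_m^{\ast}(p)\asymp \sqrt{n}\bigl(\sum_{i\geq m}a_i^2\bigr)^{1/2}$ by an unbounded factor (take $a_i\asymp i^{-\alpha}$, $\alpha>7/2$, so that \Atwo{} holds; the lemma itself is still true there, but not via your inequality). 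The bound you can actually justify, $\sum_{k'}|\E D_kD_{k'}|\lesssim \vartheta_m^{\ast}(p)$, only gives $\|S_n-S_{nm}\|_1\lesssim \sqrt{n}\,\vartheta_m^{\ast}(p)^{1/2}$; since \Atwo{} only guarantees $\vartheta_m^{\ast}(p)=o(m^{-2})$ (not $o(m^{-3})$ as you state --- convergence of $\sum_k k^2\vartheta_k^{\ast}(p)$ with nonnegative terms gives $k^2\vartheta_k^{\ast}(p)\to 0$ and nothing more), this is only $o(\sqrt{n}\,m^{-1})$, and even a refined version with covariance decay in $|k-k'|$ stalls near $\sqrt{n}\,m^{-3/2}$, because the diagonal band of width $m$ is handled only by Cauchy--Schwarz.

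The quantity that must appear is the tail sum $\sum_{l\geq m}\vartheta_l^{\ast}(p)\leq m^{-2}\sum_{l\geq m}l^2\vartheta_l^{\ast}(p)\lesssim m^{-2}$, not the single coefficient $\vartheta_m^{\ast}(p)$. The standard way to get $\|S_n-S_{nm}\|_1\lesssim \sqrt{n}\sum_{l\geq m}\vartheta_l^{\ast}(p)$ is to decompose $X_k-X_{km}$ over lags into martingale differences (projections $\E[\,\cdot\,|\mathcal{E}_{k-l}]-\E[\,\cdot\,|\mathcal{E}_{k-l-1}]$, suitably augmented by the copies $\epsilon^{(j)}$), apply orthogonality (or Burkholder) at each lag $l\geq m$ to obtain a contribution of order $\sqrt{n}$ times the lag-$l$ coefficient, and then sum over $l\geq m$. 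This is exactly what the paper does by invoking Equation (50) of Jirak (2020) and Theorem 1 of Liu, Xiao and Wu, after noting that $\|X_k-X_k^{(l,')}\|_p$ is dominated by $\|X_k-X_k^{(l,\ast)}\|_p$. Your per-term coupling observation and the final step $\sum_{l\geq m}\vartheta_l^{\ast}(p)\lesssim m^{-2}$ are the right ingredients; the middle (cancellation) step needs the projection argument rather than the covariance-band estimate.
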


\begin{proof}[Proof of Lemma \ref{lem:bound:Snm:difference}]
This is an easy consequence of Equation (50) in~\cite{jirak2020berryesseen} (note $\sum_{k \geq 1} k^{\mathfrak{a}} \|X_k-X_k^{(l,')}\|_p \lesssim \sum_{k \geq 1} k^{\mathfrak{a}} \|X_k-X_k^{(l,*)}\|_p$, $\mathfrak{a} \geq 0$, $p \geq 1$, see also Theorem 1 in~\cite{Wu_fuk_nagaev}).
Note that the construction of $X_{km}$ is slightly different in~\cite{jirak2020berryesseen}, but the argument remains equally valid.
\end{proof}

\begin{proof}[Proof of Theorem \ref{thm:edge:smallball}]
Let $\overline{n} = \lfloor n/m \rfloor$ for $m \in \N$. The proof works with any choice $m \asymp n^{\mathfrak{m}}$, $\mathfrak{m} \in (1/2,1)$. In order to establish the claim, it suffices to show that for any $a > 0$, the Berry-Esseen Characteristic $\Cd_{a \sqrt{n}}$ (cf. \ref{berry_esseen_char}) satisfies
\begin{align}
\Cd_{a \sqrt{n}} \lesssim n^{-1/2 -\delta}, \quad \delta > 0.
\end{align}

To this end, we will study $\E e^{\ic \xi {S}_{n}}$ more closely, subject to Assumptions \ref{ass:dependence:main} and \ref{ass:smallball:I}. Due to $|e^{\ic x}- 1| \leq |x|$, $|e^{\ic x}| = 1$, and Lemma \ref{lem:bound:Snm:difference}, we have

\begin{align} \nonumber \label{eq:thm:edge:smallball:0}
\bigl|\E e^{\ic \xi {S}_{n}} - \E e^{\ic \xi {S}_{nm}}\bigr| &\leq \big|\xi \big| \big\|S_n - S_{nm}\big\|_1\\&\lesssim \big|\xi \sqrt{n} m^{-2}\big| \lesssim  \big|\xi\big| n^{-1 - \delta}, \quad \delta > 0. 
\end{align}

Observe that $(B_j)_{1 \leq j \leq \overline{n}}$ is conditionally independent with respect to $\FF_m$, and is a one-dependent sequence in general. Let $\mathcal{I} = \{1,3,\ldots, 2\lfloor \overline{n}/2 \rfloor - 1 \}$. Then $(B_j)_{j \in \mathcal{I}}$ is an independent sequence. Hence using $|e^{\ic x}| = 1$, we have
\begin{align} \nonumber
\bigl\|\E_{\FF_m} e^{\ic \xi {S}_{nm}} \bigr\|_1 & \leq  \big\|\prod_{j = 1}^{\overline{n}} \big| \E_{\FF_m} e^{\ic \xi {B}_{j}} \big| \big\|_1\\& \leq  \bigl\|\prod_{j \in \mathcal{I}} \bigl|\E_{\FF_m} e^{\ic \xi {B}_{j}}\bigr|\bigr\|_1 = \prod_{j \in \mathcal{I}}\bigl\|\E_{\FF_m} e^{\ic \xi {B}_{j}}\bigr\|_1. \label{eq:zero:point:five}
\end{align}

Next, put $\mathcal{H} = \sigma\big(\mathcal{E}_0 \cup \mathcal{E}_{m+1}' \cup \mathcal{E}_{m+1}^+ \big)$, and observe that for $1 \leq j \leq \overline{n}$, we have the identity
\begin{align*}
\bigl\|\E_{\FF_m} e^{\ic \xi {B}_{j}}\bigr\|_1 = \bigl\|\E_{\mathcal{H}} e^{\ic \xi {B}_{1}}\bigr\|_1.
\end{align*}

Let $A^+ \in \mathcal{G}_{m+1}$. Since $|e^{\ic x}| = 1$, it follows that
\begin{align*}
&\big|\E_{\mathcal{H}} e^{\ic \xi {B}_{1}}\big| \leq \big|\E_{\mathcal{H}} e^{\ic \xi ({B}_{1} - A^+)}\big| \\& \leq
\bigl|\E_{\mathcal{H}} e^{\ic \xi A_{1:m}}\bigr| + \big| \E_{\mathcal{H}}e^{\ic \xi A_{1:m}} (e^{\ic \xi (A_{m+1:2m} - A^+)} - 1) \big|\\&\leq
\bigl|\E_{\mathcal{H}} e^{\ic \xi A_{1:m}}\bigr| + \E_{\mathcal{H}}\big|e^{\ic \xi (A_{m+1:2m} - A^+)} - 1\big|.
\end{align*}
Observe that $A_{m+1:2m} - A^+$ is independent of $\mathcal{E}_0$. Using $|e^{\ic x} - 1| \leq |x|$, we thus further obtain
\begin{align*}
&\E_{\mathcal{H}}\big|e^{\ic \xi (A_{m+1:2m} - A^+)} - 1\big| \leq \bigl|\xi\bigr| \E_{\mathcal{G}_{m+1}}\big| A_{m+1:2m} - A^+\bigr|.
\end{align*}

Then due to \hyperref[B1]{\Bone}, for any $\delta > 0$, there exists a set $\mathcal{A}_{\delta} \in \mathcal{G}_{m+1}$ with $\P(\mathcal{A}_{\delta}) \geq c_{\delta} > 0$, such that for appropriate choice of $A^+$
\begin{align}
\E_{\mathcal{G}_{m+1}}\big|A_{m+1:2m} - A^+\bigr|\ind_{\mathcal{A}_{\delta}} \leq \delta.
\end{align}

Since $\mathcal{A}_{\delta} \in \mathcal{H}$, we obtain from the above and $|e^{\ic x}| = 1$
\begin{align*}
\big|\E_{\mathcal{H}} e^{\ic \xi {B}_{1}}\big| &\leq \big|\E_{\mathcal{H}} e^{\ic \xi {B}_{1}} \ind_{\mathcal{A}_{\delta} }\big| + \E_{\mathcal{H}} \ind_{\mathcal{A}_{\delta}^c} \\& \leq  \Big(\bigl|\E_{\mathcal{H}} e^{\ic \xi A_{1:m}} \bigr| + \delta|\xi|\Big)\ind_{\mathcal{A}_{\delta}} + \ind_{\mathcal{A}_{\delta}^c}.
\end{align*}
Next, observe
\begin{align*}
\E_{\mathcal{H}} e^{\ic \xi A_{1:m}} = \E_{\mathcal{E}_0} e^{\ic \xi A_{1:m}}.
\end{align*}
Since $\mathcal{A}_{\delta}$ is independent from $\mathcal{E}_0$, we get
\begin{align}  \nonumber
\E\big|\E_{\mathcal{H}} e^{\ic \xi {B}_{1}}\big| &\leq \E \Big(\bigl|\E_{\mathcal{E}_0} e^{\ic \xi A_{1:m}} \bigr| + \delta |\xi|\Big)\ind_{\mathcal{A}_{\delta}} +  \E \ind_{\mathcal{A}_{\delta}^c}\\&=  \Big( \E \bigl|\E_{\mathcal{E}_0} e^{\ic \xi A_{1:m}} \bigr|
+ \delta |\xi| \Big) \P\big(\mathcal{A}_{\delta}\big) + \P\big(\mathcal{A}_{\delta}^c\big). \label{eq:one}
\end{align}

We next deal with $\E \bigl|\E_{\mathcal{E}_0} e^{\ic \xi A_{1:m}} \bigr|$. To this end, let $\mathcal{E}_m^{(1)} = \sigma(\epsilon_k^{(1)},\, k \leq m)$. Since $X_{km} \stackrel{d}{=} X_k$ and $|e^{\ic x}| = 1$, we have
\begin{align*}
\E\big|\E_{\mathcal{E}_{0}} e^{\mathrm{i} \xi A_{1:m}}  \big| &\leq \E\big|\E_{\sigma(\mathcal{E}_{m-1}, \mathcal{E}_{m-1}^{(1)})} e^{\mathrm{i} \xi A_{1:m}}  \big| \\&\leq \E\big|\E_{\mathcal{E}_{-1}} e^{\mathrm{i} \xi X_0 } \big|.
\end{align*}

One readily shows that the map $g:\R \to [0,1]$, given by
\begin{align*}
g\big(\xi\big) = \E\big|\E_{\mathcal{E}_{-1}} e^{\mathrm{i} \xi X_0}  \big|,
\end{align*}
is continuous. Let $I = [a,b]$, $0 < a < b$. Then by compactness, there exists $\xi^{\ast} \in I$ such that
\begin{align*}
\sup_{\xi \in I}g\big(\xi\big) = g\big(\xi^{\ast}\big) < 1,
\end{align*}
where we used \hyperref[B2]{\Btwo} for the last inequality. Hence for any $0 <a < b$, there exists $\eta_{ab} < 1$ such that

\begin{align}\label{eq:two}
\sup_{\xi \in [a,b]} \E\big|\E_{\mathcal{E}_{-1}} e^{\mathrm{i} \xi X_0}  \big| \leq \eta_{ab} < 1.
\end{align}

Setting $\delta = (1 - \eta_{ab})/2b$, we obtain from \eqref{eq:one} and \eqref{eq:two} that for any $\xi \in [a,b]$, there exists $\rho_{ab} < 1$ such that (recall $\P(\mathcal{A}_{\delta}) \geq c_{\delta}$)

\begin{align*}
\E\big|\E_{\mathcal{H}} e^{\ic \xi {B}_{1}}\big| &\leq  \Big( \E \bigl|\E_{\mathcal{E}_0} e^{\ic \xi A_{1:m}} \bigr|
+ \delta |\xi| \Big) \P\big(\mathcal{A}_{\delta}\big) + \P\big(\mathcal{A}_{\delta}^c\big)
\\& \leq \frac{1 + \eta_{ab}}{2} \P\big(\mathcal{A}_{\delta}\big) + \P\big(\mathcal{A}_{\delta}^c\big) \leq \rho_{ab}.
\end{align*}
Consequently, since $|\mathcal{I}| \geq \overline{n}/3$ for $n$ large enough, we get
\begin{align*}
\sup_{\xi \in [a,b]} \prod_{j \in \mathcal{I}}\bigl\|\E_{\FF_m} e^{\ic \xi {B}_{j}}\bigr\|_1 \leq  \rho_{ab}^{\frac{\overline{n}}{3}}.
\end{align*}
Combining \eqref{eq:zero:point:five} and the above, we conclude that there exists $b_n \to \infty$ such that for any $a > 0$, we have
\begin{align*}
\sup_{\xi \in [a,b_n]} \prod_{j \in \mathcal{I}}\bigl\|\E_{\FF_m} e^{\ic \xi {B}_{j}}\bigr\|_1 \lesssim n^{-1/2 - \delta'}, \quad \delta' > 0.
\end{align*}
Using \eqref{eq:thm:edge:smallball:0}, it follows that
\begin{align}
\sup_{\xi \in [a,b_n]} \Big|\E e^{\ic \xi {S}_{n}} \Big| \lesssim  n^{-1/2 - \delta'},
\end{align}
and
\begin{align*}
\Cd_{a \sqrt{n}} \leq \big(\sup_{x \in \R}\bigl|\Td_{a \sqrt{n}}^{b_n \sqrt{n}}(x) \bigr| + 1/(b_n \sqrt{n})\big) = o\big(n^{-1/2}\big),
\end{align*}
which, by virtue of Lemma \ref{lem_edge}, completes the proof.
\end{proof}

For the proof of Theorem \ref{thm:wasser:general}, we require some additional notation. For $e > 0$ and $f \in \N$ even, let $G_{e,f}$  be a real valued random variable with density function
\begin{align}\label{eq_g_ab}
g_{e,f}(x) = c_f e \Big|\frac{\sin(e x)}{e x} \Big|^f, \quad x \in \R,
\end{align}
for some constant $c_f > 0$ only depending on $f$. It is well-known (cf. ~\cite{Bhattacharya_rao_1976_reprint_2010}, Section 10) that for even $f$ the Fourier transform $\hat{g}_{e,f}$ {satisfies}
\begin{align}\label{eq_thm_smooth_fourier}
\hat{g}_{e,f}(t) =  \left\{
\begin{array}{ll}
2 \pi c_f u^{\ast \, f}[-e,e](t) &\text{if $|t| \leq e f$},\\
0 &\text{otherwise},
\end{array}
\right.
\end{align}
where $u^{\ast \, f}[-e,e]$ denotes the $f$-fold convolution of {the} density of the uniform distribution on $[-e,e]$, that is $u[-e,e](t) = \frac{1}{2e} \ind_{[-e,e]}(t)$. For $f \geq 6$, let $(H_k)_{k \in \Z}$ be i.i.d. with $H_k \stackrel{d}{=} G_{e,f}$ and independent of $S_n$. For $\eta > 0$, {define}
\begin{align}\label{defn_diamond_mod}
{X}_k^{\diamond} = X_k + \eta H_{k} - \eta H_{k-1}, \quad {S}_n^{\diamond} = \sum_{k = 1}^n {X}_k^{\diamond} = S_n + \eta H_n - \eta H_0,
\end{align}
and $L_n^{\diamond}$ in analogy.

\begin{proof}[Proof of Theorem \ref{thm:wasser:general}]
Using standard properties of the Wasserstein distance and the triangle inequality, we arrive at
\begin{align}\label{eq:thm:gen:w1:1}
W_1\big(\P_{S_n^{}/\sqrt{n}}, \P_{L_n}\big) \leq W_1\big(\P_{S_n^{\diamond}/\sqrt{n}}, \P_{L_n^{\diamond}}\big) + 4 \eta \E\big|G_{e,f}\big|/\sqrt{n}.
\end{align}
For $f \geq 6$ and small enough $e> 0$, we get, using \eqref{eq:thm:edge:smallball:condition}, for any $c,\eta > 0$
\begin{align}\label{eq:thm:gen:w1:1.5}
\sup_{|\xi| \geq c }\big|\E e^{\ic \xi S_n^{\diamond}} \big| \leq  \sup_{c \leq |\xi| \leq ef/\eta}\big|\E e^{\ic \xi S_n}\big| \leq C_{c,\delta} n^{-1/2-\delta}, \quad \delta > 0.
\end{align}
Following the proof of Theorem 3.6. in ~\cite{jirak:edge:trans:2021}, we obtain
\begin{align}\label{eq:thm:gen:w1:1.7}
W_1\big(\P_{S_n^{\diamond}/\sqrt{n}}, \P_{L_n^{\diamond}}\big) \lesssim  n^{-p/2 +1} + \int_{|x| \leq \tau_n} \big|\P(S_n^{\diamond} \leq x \sqrt{n}) - \P(L_n^{\diamond} \leq x )\big| dx,
\end{align}
where $\tau_n \lesssim \sqrt{\log n}$. By \eqref{eq:thm:gen:w1:1.5}, we have for $\Td_{a}^{b}(x)$ (defined with respect to $S_n^{\diamond}$)
\begin{align*}
\big|\Td_{a \sqrt{n}}^{b \sqrt{n}}(x)\big| \leq \int_{a \leq |\xi|/\sqrt{n} \leq b} \Big| e^{-\ic \xi x}\E e^{\ic \xi S_n^{\diamond}/\sqrt{n}} \Bigl(1 - \frac{|\xi|}{b \sqrt{n}}\Bigr) \frac{1}{\xi}\Big|  d \, \xi \leq C_{a,\delta} \frac{\log(n b)}{n^{-1/2 - \delta}},
\end{align*}
which does not depend on $x$. Hence for $\Cd_{a\sqrt{n}}$ (defined with respect to $S_n^{\diamond}$)
\begin{align*}
\Cd_{a \sqrt{n}} = \inf_{b \geq a}\Big(\sup_{x \in \R}\bigl|\Td_{a \sqrt{n}}^{b \sqrt{n}}(x) \bigr| + 1/(b \sqrt{n})\Big) \leq C_{a,\delta}' \frac{\log(n)}{n^{-1/2 - \delta}}.
\end{align*}
An application of Lemma \ref{lem_edge} then yields
\begin{align*}
\sup_{x \in \R}\big|\P(S_n^{\diamond} \leq x \sqrt{n}) - \P(L_n^{\diamond} \leq x ) \big| \leq C_{a,\delta}'' n^{-1/2-\delta'}, \quad \delta' > 0.
\end{align*}
Plugging this into \eqref{eq:thm:gen:w1:1.7}, we obtain
\begin{align}\label{eq:thm:gen:w1:2}
W_1\big(\P_{S_n^{\diamond}/\sqrt{n}}, \P_{L_n^{\diamond}}\big) \leq C_{\delta,\delta}''' n^{-1/2 - \delta'}\big(1 + \log n\big).
\end{align}
Selecting $\eta = \eta_n \to 0$ sufficiently slow (\eqref{eq:thm:edge:smallball:condition} must be valid, see \eqref{eq:thm:gen:w1:1.5}), the claim follows by combining \eqref{eq:thm:gen:w1:1} and \eqref{eq:thm:gen:w1:2}.
\end{proof}

\section{Proofs of Volatility models}

We first state the following elementary lemma.

\begin{lemma}\label{lem:hoelder}
Suppose that the function $f$ satisfies \eqref{eq:hoelder:function}, and assume $\|Y_0\|_{p(\alpha + \beta)} + \sum_{k \in \N} k^2 \|Y_k-Y_k^*\|_{p(\alpha + \beta)}^{\beta} < \infty$, $p(\alpha + \beta)$, for $p \geq 3$. Then $X_k = f(Y_k) - \E f(Y_k)$ satisfies \hyperref[A2]{\Atwo}.
\end{lemma}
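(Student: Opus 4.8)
The plan is to deduce \hyperref[A2]{\Atwo} directly from the assumed weighted summability of $\|Y_k-Y_k^*\|_{p(\alpha+\beta)}^{\beta}$, using only the generalised Hölder bound \eqref{eq:hoelder:function} and one application of Hölder's inequality on the probability space. I will carry out the argument for the coupling $Y_k^*=Y_k^{(k,*)}$, the general filter version $Y_k^{(l,*)}$ being handled verbatim; recall that $\vartheta_k^*(p)=\sup_j\|X_j-X_j^{(k,*)}\|_p$, which in the time-homogeneous Bernoulli-shift case reduces to $\|X_k-X_k^*\|_p$.

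First I would observe that the deterministic centering cancels in the coupling difference, so $X_k-X_k^*=f(Y_k)-f(Y_k^*)$, and hence \eqref{eq:hoelder:function} gives the pointwise bound $|X_k-X_k^*|\le L\,|Y_k-Y_k^*|^{\beta}\bigl(1+|Y_k|^{\alpha}+|Y_k^*|^{\alpha}\bigr)$. Assume $\alpha>0$, the case $\alpha=0$ being immediate since the bracket is then the constant $3$. Taking $p$-th moments and applying Hölder's inequality to $\E\bigl[|Y_k-Y_k^*|^{\beta p}(1+|Y_k|^{\alpha}+|Y_k^*|^{\alpha})^{p}\bigr]$ with the conjugate exponents $r=(\alpha+\beta)/\beta$ and $r'=(\alpha+\beta)/\alpha$, and then taking the $p$-th root, yields $\|X_k-X_k^*\|_p\le L\,\|Y_k-Y_k^*\|_{p(\alpha+\beta)}^{\beta}\,\bigl\|1+|Y_k|^{\alpha}+|Y_k^*|^{\alpha}\bigr\|_{p(\alpha+\beta)/\alpha}$, because $\beta p r=\alpha p r'=p(\alpha+\beta)$. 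By Minkowski's inequality together with stationarity (note $Y_k^*\stackrel{d}{=}Y_k\stackrel{d}{=}Y_0$), the last factor is bounded by the finite constant $C_0:=1+2\|Y_0\|_{p(\alpha+\beta)}^{\alpha}$, which is where the moment hypothesis $\|Y_0\|_{p(\alpha+\beta)}<\infty$ is used.

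Combining the two displays gives $\vartheta_k^*(p)\le L\,C_0\,\|Y_k-Y_k^*\|_{p(\alpha+\beta)}^{\beta}$ for every $k\ge 0$, whence $\sum_{k\ge0}k^2\vartheta_k^*(p)\le L\,C_0\sum_{k\ge0}k^2\|Y_k-Y_k^*\|_{p(\alpha+\beta)}^{\beta}<\infty$ by hypothesis, which is precisely \hyperref[A2]{\Atwo}. I do not anticipate a genuine obstacle here; the only point requiring attention is the bookkeeping of exponents — choosing the conjugate pair $(r,r')$ so that both moments produced by Hölder's inequality are of the single order $p(\alpha+\beta)$, for which a bound is available both on $Y_0$ and on the dependence coefficients — and, when $\beta>1$, noting that the estimate is unaffected because we only ever take $\|\cdot\|_p$-type norms of $|Y_k-Y_k^*|^{\beta}$ rather than of $Y_k-Y_k^*$ itself.
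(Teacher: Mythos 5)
Your proof is correct and follows essentially the same route as the paper: apply the generalised H\"older bound \eqref{eq:hoelder:function} to $X_k-X_k^*=f(Y_k)-f(Y_k^*)$ and then H\"older's inequality with the conjugate pair $\bigl((\alpha+\beta)/\beta,(\alpha+\beta)/\alpha\bigr)$ so that both resulting norms are of order $p(\alpha+\beta)$, giving $\|X_k-X_k^*\|_p\lesssim\|Y_k-Y_k^*\|_{p(\alpha+\beta)}^{\beta}\bigl(1+\|Y_0\|_{p(\alpha+\beta)}^{\alpha}\bigr)$ and hence \hyperref[A2]{\Atwo} by summation. Your bookkeeping of the exponents (and the remark on the trivial case $\alpha=0$) matches the paper's one-line computation, where the second exponent is in fact $(\alpha+\beta)/\alpha$ as you use it.
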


\begin{proof}[Proof of Lemma \ref{lem:hoelder}]
Using Hölders inequality with $r =\alpha/\beta+1$, $s = (\alpha + \beta)/\alpha$, we get
\begin{align*}
\big\|X_k - X_k^*\big\|_p &\leq \big\||Y_k - Y_k^*|^{\beta} (L + |Y_k|^{\alpha} + |Y_k^*|^{\alpha}\big\|_p \leq
\big\|Y_k - Y_k^*\big\|_{rp\beta}^{\beta}\big(L + 2\big\|Y_k\big\|_{sp\alpha}^{\alpha} \big)
\\&= \big\|Y_k - Y_k^*\big\|_{p(\alpha + \beta)}^{\beta}\big(L + 2\big\|Y_k\big\|_{p(\alpha + \beta)}^{\alpha}\big).
\end{align*}
\end{proof}

\subsection{Proof of Theorem \ref{thm:garch}}

\begin{proof}[Proof of Theorem \ref{thm:garch}]
In order to apply Theorem \ref{thm:edge:smallball}, we need to validate both Assumptions \ref{ass:dependence:main} and \ref{ass:smallball:I}, based on Assumptions \ref{ass:smallball:vola} and \ref{ass:garch}. We will do so below. Since, as mentioned above, $V_k$ is a time-homogenous Bernoulli-shift process, the quantities in Assumption \ref{ass:smallball:I} do not depend on $l \in \Z$, simplifying the notation. We first consider the case $h_n \equiv 0$.
\\
\hyperref[B1]{\Bone}: We first validate \hyperref[B1]{\Bone}, which requires most attention. To this end, we first introduce some necessary quantities. Let
\begin{align*}
(V_k^+)^{2\lambda} =  \sum_{m = 1}^{\infty} &\sum_{1 \leq l_1,\ldots,l_m\leq \mathfrak{r}}  g_{l_m}(\varepsilon_{k - l_1 - \ldots - l_m})\ind(k>l_1 + \ldots + l_m) \\& \times \prod_{i = 1}^{m-1} c_i(\varepsilon_{k - l_1 - \ldots - l_i})\ind(k>l_1 + \ldots + l_i),
\end{align*}
and $X_k^+ = f(\varepsilon_k V_k^+) \in \mathcal{E}_1^+$ for $k \geq 1$ (note that $V_k \in \mathcal{E}_{k-1}$ here by construction). Observe the bound
\begin{align}\label{eq:V-V^+:bound}
\big|(V_k^+)^{2\lambda} - V_k^{2\lambda}\big| &\leq \sum_{m > \lfloor k/\mathfrak{r} \rfloor}^{\infty} \sum_{1 \leq l_1,\ldots,l_m\leq \mathfrak{r}}  g_{l_m}(\varepsilon_{k - l_1 - \ldots - l_m}) \prod_{i = 1}^{m-1} c_i(\varepsilon_{k - l_1 - \ldots - l_i}),
\end{align}
which we will repeatedly use. For $\delta > 0$, denote with
\begin{align*}
\mathcal{A}_{1j\delta} = \big\{\sum_{k = 1}^j \E_{\mathcal{E}_1^+}|X_k - X_k^+| \leq \delta \big\},
\end{align*}
and let $\mathcal{E}_{ij} = \sigma(\varepsilon_k,\, i \leq k \leq j)$ (note $\mathcal{E}_{1\infty} = \mathcal{E}_1^+$). Then
\begin{align}\label{eq:lower:bound:key} \nonumber
\P\Big(\sum_{k = 1}^{\infty} \E_{\mathcal{E}_1^+}|X_k - X_k^+| \leq 2\delta \Big) &\geq \P\Big(\sum_{k > j}^{\infty} \E_{\mathcal{E}_1^+}|X_k - X_k^+| \leq \delta \cap \mathcal{A}_{1j\delta} \Big) \\&= \nonumber \E \P_{\mathcal{E}_{1j}}\Big(\sum_{k > j}^{\infty} \E_{\mathcal{E}_1^+}|X_k - X_k^+| \leq \delta \Big) \ind_{\mathcal{A}_{1j\delta}} \\&\geq \E \Big(1 - \delta^{-1}\sum_{k > j}^{\infty} \E_{\mathcal{E}_{1j}}|X_k - X_k^+| \Big) \ind_{\mathcal{A}_{1j\delta}},
\end{align}
where we used Markovs inequality in the last step. This simple lower bound is the key for establishing \hyperref[B1]{\Bone}.

Let $\mathcal{B}_{1j\eta} = \{|\varepsilon_k| \leq \eta, \, 1 \leq k \leq j\}$. Select $0 < \eta \leq 1$ such that $\sup_{|\varepsilon| \leq \eta}c_{i}(\varepsilon) \leq \|c_i(\varepsilon_0)\|_{q}$ for $1 \leq i \leq \mathfrak{r}$, which is always possible by Assumption \ref{ass:garch}. Assume without loss of generality $\|g_i(\varepsilon)\|_q \leq C_g$. Then by Assumption \ref{ass:garch}, we have on the event $\mathcal{B}_{1j\eta}$
\begin{align*}
\Big(\E_{\mathcal{E}_{1j}} \Big| \sum_{1 \leq l_1,\ldots,l_m\leq \mathfrak{r}} g_{l_m}(\varepsilon_{k - l_1 - \ldots - l_m}) \prod_{i = 1}^{m-1} c_i(\varepsilon_{k - l_1 - \ldots - l_i}) \Big|^q\Big)^{1/q} \leq C_g \rd \gamma_c^{m-1},
\end{align*}
where we recall $\|c_1(\varepsilon)\|_q + \ldots \|c_{\mathfrak{r}}(\varepsilon) \|_q \leq \gamma_c < 1$. From the above, we conclude (on the event $\mathcal{B}_{1j\eta}$),
\begin{align}\label{eq:garch:V:diff}
\big(\E_{\mathcal{E}_{1j}}\big|V_k^{2\lambda} - (V_k^+)^{2\lambda}\big|^{q}\big)^{1/q}\leq  C_g \rd  \frac{\gamma_c^{\lfloor k/\rd \rfloor}}{1 - \gamma_c},
\end{align}
and similarly, one obtains (still on the event $\mathcal{B}_{1j\eta}$)
\begin{align}\label{eq:garch:V:bound}
\big(\E_{\mathcal{E}_{1j}}\big|V_k^{2\lambda}\big|^{q}\big)^{1/q} \leq C_g \rd  \frac{1}{1 - \gamma_c}.
\end{align}

In the following derivations below, all norms $\|\cdot\|_r$ are taken with respect to $\P_{\mathcal{E}_{1j}}$. Since $V_k \geq V_k^+$ and $\lambda \geq 1/2$, we have from $|x^{1/(2\lambda)}-y^{1/(2\lambda)}| \leq |x-y|^{1/(2\lambda)}$ and Cauchy-Schwarz
\begin{align}\label{eq:garch:3} \nonumber
\big\|X_k - X_k^+\big\|_1 &\leq \big\| |\varepsilon_k|^{\beta} |V_k - V_k^+|^{\beta}\big(1 + |2\varepsilon_k V_k|^{\alpha}\big\|_1 \big) \\& \nonumber \leq \big\||\varepsilon_k|^{\beta} |V_k^{2\lambda} - (V_k^+)^{2\lambda}|^{\beta/(2\lambda)}\big(1 + |2\varepsilon_k V_k|^{\alpha} \big\|_1 \big) \\&\leq
\big\| |\varepsilon_k|^{\beta} |V_k^{2\lambda} - (V_k^+)^{2\lambda}|^{\beta/(2\lambda)} \big\|_2
\big\|1 + |2\varepsilon_k V_k|^{\alpha}\big\|_2.
\end{align}

By independence, \eqref{eq:garch:V:diff} and Jensens inequality (if ${\beta/\lambda} < 1$, we again apply Jensens inequality in addition)
\begin{align}\label{eq:garch:4}
\big\| |\varepsilon_k|^{2\beta} |V_k^{2\lambda} - (V_k^+)^{2\lambda}|^{\beta/\lambda} \big\|_1^{\lambda/\beta} \leq \big\| \varepsilon_k^2\big\|_1^{\lambda} C_g \rd  \frac{\gamma_c^{\lfloor k/\rd \rfloor}}{1 - \gamma_c}.
\end{align}
Similarly, by independence and \eqref{eq:garch:V:bound} 
\begin{align}\label{eq:garch:5}
\big\|1 + |2\varepsilon_k V_k|^{\alpha}\big\|_2 \leq 1 + 2\big\|\varepsilon_k\big\|_{2\alpha}^{\alpha}  \Big(C_g \rd  \frac{1}{1 - \gamma_c}\Big)^{\frac{\alpha}{2\lambda}}.
\end{align}

All in all, on the event $\mathcal{B}_{1j\eta}$, combining \eqref{eq:garch:3}, \eqref{eq:garch:4}, and \eqref{eq:garch:5}, we arrive at
\begin{align*}
\E_{\mathcal{E}_{1j}}|X_k - X_k^+\big|_1 \leq C_+ \rho^k \big(\E_{\mathcal{E}_{1j}} |\varepsilon_k|^2\big)^{\beta/2},
\end{align*}
where $C_+$ does not depend on $\eta$, and $\rho < 1$ only depends on $\gamma_c$, $\lambda$ and $\beta$. Moreover, we have $\E_{\mathcal{E}_{1j}} \varepsilon_k^2 = \varepsilon_k^{2} \leq \eta^{2}$ for $1 \leq k \leq j$ on the event $\mathcal{B}_{1j\eta}$, and consequently
\begin{align}\label{eq:garch:6}
\sum_{k = 1}^j \E_{\mathcal{E}_{1j}}|X_k - X_k^+| \leq C_{+} \eta^{\beta} \frac{1}{1 - \rho}.
\end{align}
Hence selecting $\eta$ such that $C_{+} \eta^{\beta} \frac{1}{1 - \rho} < \delta$, we conclude $\mathcal{B}_{1j\eta} \subseteq \mathcal{A}_{1j\delta}$, and \hyperref[V1]{\Vone} yields
\begin{align}
\P\big(\mathcal{A}_{1j\delta}\big) \geq \P\big(\mathcal{B}_{1j\eta}\big) > 0.
\end{align}

Similarly, we obtain on $\mathcal{B}_{1j\eta}$, for $k > j$, the estimate

\begin{align*}
\sum_{k > j} \E_{\mathcal{E}_{1j}}|X_k - X_k^+| \leq C_{+} \frac{\rho^j}{1 - \rho} \big(\E |\varepsilon_0|^2\big)^{\beta/2}.
\end{align*}

Selecting $j_0$ sufficiently large, we get for $j \geq j_0$
\begin{align*}
\E \Big(1 - \delta^{-1}\sum_{k > j}^{\infty} &\E_{\mathcal{E}_{1j}}|X_k - X_k^+| \Big) \ind_{\mathcal{A}_{1j\delta}} \\&\geq
\E \Big(1 - \delta^{-1} C_{+} \frac{\rho^j}{1 - \rho} \big(\E |\varepsilon_0|^2\big)^{\beta/2}\Big) \ind_{\mathcal{B}_{1j\delta}}\\&\geq \E \Big(1 - \frac{1}{2}\Big) \ind_{\mathcal{B}_{1j\delta}} = \frac{\P(\mathcal{B}_{1j\eta}) }{2} > 0.
\end{align*}

Hence \hyperref[B1]{\Bone} holds. Let us now consider the case $h_n \not \equiv 0$, which turns out to be just a minor extension. Arguing as above in \eqref{eq:garch:3}, \eqref{eq:garch:4}, and \eqref{eq:garch:5}, we obtain
$\sum_{k = 1}^{\infty} \|h_n(V_k) - h_n(V_k^+)\|_1 \leq C$, where $C$ does not depend on $n$. Hence by the triangle and Jensen's inequality
\begin{align}\label{eq:garch:7}
\Big\|\sum_{k = 1}^{\infty} \E_{\mathcal{E}_1^+}\big|h_n(V_k) - h_n(V_k^+)\big|\Big\|_1 \leq C.
\end{align}
On the other hand, \eqref{eq:h:limit:zero}, the triangle and Jensen's inequality imply that there exists $l_n \to \infty$, such that
\begin{align}
\Big\|\sum_{k = 1}^{l_n} \E_{\mathcal{E}_1^+}\big|h_n(V_k)\big|\Big\|_1 \leq  \sum_{k = 1}^{l_n} \E \big|h_n(V_k)\big| \to 0
\end{align}
as $n$ increases. Setting $h_n^+(V_k) = h_n(V_k^+)$ for $k \geq l_n$ and $h_n^+(V_k) \equiv 0$ otherwise, we conclude from the above
\begin{align}
\lim_{n \to \infty} \Big\|\sum_{k = 1}^{\infty} \E_{\mathcal{E}_1^+}\big|h_n(V_k) - h_n^+(V_k)\big|\Big\|_1 = 0.
\end{align}
Piecing everything together, the validity of \hyperref[B1]{\Bone} follows.\\

\hyperref[B2]{\Btwo}:
Note that $v \mapsto \E e^{\mathrm{i} \xi f(v \epsilon) } $ is continuous in $v$. Then by compactness of $\mathcal{V}$ and
\hyperref[V2]{\Vtwo}, we have $\sup_{v \in \mathcal{V}}\big|\E_{} e^{\mathrm{i} \xi f(v \epsilon) }  \big| \ind_{\mathcal{V}} < 1$, and due to $|e^{\mathrm{i} z}| = 1$, $h_n(V_k) \in \mathcal{E}_{k-1}$ and \hyperref[V2]{\Vtwo}, we thus conclude
\begin{align*}
\E\big|\E_{\mathcal{E}_{-1}} e^{\mathrm{i} \xi X_0 } \big| &\leq \E \sup_{v \in \mathcal{V}}\big|\E_{} e^{\mathrm{i} \xi f(v \epsilon) }  \big| \ind_{\mathcal{V}} + \P\big(\mathcal{V}^c\big) \\&< \P\big(\mathcal{V}\big) + \P\big(\mathcal{V}^c\big) = 1.
\end{align*}
Hence \hyperref[B2]{\Btwo} holds.\\

\hyperref[A2]{\Atwo}:
Arguing similarly as above for establishing \hyperref[B1]{\Bone}, one derives
\begin{align}
\|\varepsilon_k V_k - \varepsilon_k V_k^{\ast}\|_q, \, \|V_k - V_k^{\ast}\|_q \leq C \rho^k, \quad \rho < 1.
\end{align}
The claim now follows from Lemma \ref{lem:hoelder} and the triangle inequality. \\
\hyperref[A1]{\Aone}: We may repeat arguments employed in \hyperref[A2]{\Atwo} (resp. \hyperref[B1]{\Bone}).
\end{proof}

\subsection{Proof of Theorem \ref{thm:iterated}}

\begin{proof}[Proof of Theorem \ref{thm:iterated}]
Let $V_k^+ = F_{\varepsilon_k} \circ F_{\varepsilon_{k-1}} \circ \ldots \circ F_{\varepsilon_l}(0)$. Then $V_k^+ \in \mathcal{E}_l^+$. Although we are no longer in the time-homogenous Bernoulli-shift setup, it is obvious that we can repeat the proof of Theorem \ref{thm:garch}, almost verbatim. In fact, due to the more explicit iterative structure, some computations are even simpler.
\end{proof}

\subsection{Proof of Theorem \ref{thm:linear}}

\begin{proof}[Proof of Theorem \ref{thm:linear}]
As for augmented Garch sequences, we are again in the time-homogenous Bernoulli-shift case. Let
\begin{align*}
G_k^+ = \sum_{i = 0}^{k-1} a_i c_i(\epsilon_{k-i}), \quad V_k^+ = g(G_k^+).
\end{align*}
Then $V_k^+ \in \mathcal{E}_1^+$. It is again obvious that we can repeat the proof of Theorem \ref{thm:garch}, almost verbatim. As in the case of Theorem \ref{thm:iterated}, the actual proof is even simpler.
\end{proof}

\subsection{Proof of Theorem \ref{thm:volterra}}

\begin{proof}[Proof of Theorem \ref{thm:volterra}]
As for augmented Garch sequences and functions of linear processes, we are again in the time-homogenous Bernoulli-shift case.
Let
\begin{align*}
V_k^+ = \sum_{i = 1}^{\infty} \sum
_{0 \leq j_1 <\cdots<j_i \leq k} a_k (j_1,\ldots,
j_i ) \epsilon_{k - j_1} \cdots\epsilon_{k-j_i}.
\end{align*}
Since clearly $V_1^+ \in \mathcal{E}_1^+$, we may now repeat the proof of Theorem \ref{thm:garch}. As in previous cases, the actual proof is even simpler.
\end{proof}

\end{document}